\newtheorem{theorem}{Theorem}[section]
\newtheorem{lemma}[theorem]{Lemma}
\newtheorem{proposition}[theorem]{Proposition}
\newtheorem{corollary}[theorem]{Corollary}
\theoremstyle{definition}
\newtheorem{example}[theorem]{Example}
\newtheorem{remark}[theorem]{Remark}
\begin{document}

\title[Cocharacters of block triangular matrices]
{Cocharacters of polynomial identities\\
of block triangular matrices}

\author[Vesselin Drensky, Boyan Kostadinov]
{Vesselin Drensky, Boyan Kostadinov}
\address{Institute of Mathematics and Informatics,
Bulgarian Academy of Sciences, 1113 Sofia, Bulgaria}
\email{drensky@math.bas.bg}
\address{Allianz Bulgaria,
59 Dondukov Boulevard,
1504 Sofia, Bulgaria}
\email{boyan.sv.kostadinov@gmail.com}

\thanks{The updated version of this preprint was partially supported by Grant I02/18
``Computational and Combinatorial Methods
in Algebra and Applications''
of the Bulgarian National Science Fund.}

\subjclass[2010]
{16R10; 05A15; 05E05; 05E10; 20C30.}
\keywords{Algebras with polynomial identity, block triangular matrices, cocharacter sequence, multiplicities, Hilbert series.}

\maketitle

\begin{abstract}
Let $T(R_{p,q}(K))$ be the T-ideal of the polynomial identities of the
algebra of upper block triangular $(p+2q)\times (p+2q)$ matrices over a field $K$ of
characteristic zero with diagonal consisting of $p$ copies of $1\times 1$ and $q$ copies of $2\times 2$
matrices. We give an algorithm which calculates the
generating function of the cocharacter sequence
$\chi_n(R_{p,q}(K))=\sum_{\lambda\vdash n}m_{\lambda}(R_{p,q}(K))\chi_{\lambda}$
of the T-ideal $T(R_{p,q}(K))$. We have found the
explicit form of the multiplicities $m_{\lambda}(R_{p,q}(K))$ and their asymptotic behaviour
for small values of $p$ and $q$.
\end{abstract}

\section*{Introduction}

We fix a field $K$ of characteristic 0 and consider unital associative algebras over $K$ only.
For a background of PI-algebras and details of the results we refer to the book \cite{D2}.
For a survey on the results related to our paper see \cite{BD}.
Let $R$ be a PI-algebra and let
\[
T(R)\subset K\langle X\rangle=K\langle x_1,x_2,\ldots\rangle
\]
be the T-ideal of its polynomial identities, where $K\langle X\rangle$ is the free associative algebra of countable rank.
One of the most important objects in the quantitative study of the polynomial identities of $R$ is the cocharacter sequence of $R$.
The $n$-th cocharacter $\chi_n(R)$ is equal to the character of the representation of $S_n$ acting on the vector subspace
$P_n\subset K\langle X\rangle$ of the multilinear polynomials of degree $n$ modulo the polynomial identities of $R$:
\[
\chi_n(R)=\sum_{\lambda\vdash n}m_{\lambda}(R)\chi_{\lambda},\quad n=0,1,2,\ldots,
\]
where the summation runs on all partitions $\lambda=(\lambda_1,\ldots,\lambda_n)$ of $n$
and $\chi_{\lambda}$ is the corresponding irreducible character of $S_n$.
The explicit form of the multiplicities $m_{\lambda}(R)$ is known for few algebras only,
among them the Grassmann algebra $E$ (Krakowski and Regev \cite{KR}, Olsson and Regev \cite{OR}),
the $2\times 2$ matrix algebra $M_2(K)$ (Formanek \cite{F1} and Drensky \cite{D1}),
the algebra $U_2(K)$ of the $2\times2$ upper triangular matrices (Mishchenko, Regev and Zaicev \cite{MRZ},
based on the approach of Berele and Regev \cite{BR1}, see also \cite{D2}),
the tensor square $E\otimes E$ of the Grassmann algebra (Popov \cite{P2}, Carini and Di Vincenzo \cite{CDV}),
the algebra $U_2(E)$ of $2\times2$ upper triangular matrices with Grassmann entries (Centrone \cite{Ce}).

One of the possible ways to calculate the cocharacter sequence of $R$ is the following.
The general linear group $GL_d=GL_d(K)$ acts on the $d$-generated free subalgebra
$K\langle X_d\rangle=K\langle x_1,\ldots,x_d\rangle\subset K\langle X\rangle$
and its ideal of the polynomial identities in $d$ variables of $R$ is $GL_d$-invariant.
The relatively free algebra of rank $d$
\[
F_d(R)=K\langle X_d\rangle/(K\langle X_d\rangle\cap T(R))=K\langle x_1,\ldots,x_d\rangle/(K\langle x_1,\ldots,x_d\rangle\cap T(R))
\]
in the variety of algebras ${\rm{var}}(R)$ generated by the algebra $R$ is ${\mathbb Z}^d$-graded with grading defined by
\[
{\rm{deg}}(x_1)=(1,0,\ldots,0), {\rm{deg}}(x_2)=(0,1,\ldots,0),\ldots,{\rm{deg}}(x_d)=(0,0,\ldots,1).
\]
The Hilbert series
\[
H(F_d(R),T_d)=H(F_d(R),t_1,\ldots,t_d)=\sum_{n_i\geq 0}\dim(F_d^{(n_1,\ldots,n_d)}(R))t_1^{n_1}\cdots t_d^{n_d}
\]
of $F_d(R)$, where $F_d^{(n_1,\ldots,n_d)}(R)$ is the homogeneous component of degree $(n_1,\ldots,n_d)$ of $F_d(R)$,
is a symmetric function which plays the role of the character of the corresponding $GL_d$-representation.
The Schur functions $S_{\lambda}(T_d)=S_{\lambda}(t_1,\ldots,t_d)$ are the characters of the irreducible $GL_d$-submodules
$W_d(\lambda)$ of $F_d(R)$ and
\[
H(F_d(R),T_d)=\sum_{\lambda}m_{\lambda}(R)S_{\lambda}(T_d),\quad \lambda=(\lambda_1,\ldots,\lambda_d).
\]
By a result of Berele \cite{B3} and Drensky \cite{D1, D4}, the multiplicities $m_{\lambda}(R)$ are the same as
in the cocharacter sequence $\chi_n(R)$, $n=0,1,2,\ldots$. Hence, in principle, if we know the Hilbert series $H(F_d(R),T_d)$,
we can find the multiplicities $m_{\lambda}(R)$ in $\chi_n(R)$ for the partitions $\lambda$ in not more than $d$ parts.
When $R$ is a finite dimensional algebra, the multiplicities $m_{\lambda}(R)$ are equal to zero for partitions
$\lambda=(\lambda_1,\ldots,\lambda_d)$, $\lambda_d\neq0$, for $d>{\rm{dim}}(R)$, see Regev \cite{R2}.
Hence, all $m_{\lambda}(R)$ can be recovered from $H(F_d(R),T_d)$ for $d$ sufficiently large.
Following the idea of Drensky and Genov \cite{DG1} we consider the multiplicity series of $R$
\[
M(R;T_d)=M(R;t_1,\ldots,t_d)=\sum_{\lambda}m_{\lambda}(R)T_d^{\lambda}=\sum_{\lambda}m_{\lambda}(R)t_1^{\lambda_1}\cdots t_d^{\lambda_d},
\]
i.e., the generating function of the cocharacter sequence of $R$ for the partitions in $\leq d$ parts.
Then, if we know the Hilbert series $H(F_d(R),T_d)$, the problem is to compute the multiplicity series $M(R;T_d)$
and to find its coefficients. This problem was solved in \cite {DG2} for rational symmetric functions of special kind and in two variables.
Berele \cite{B1}, see also Berele and Regev \cite{BR2} and Berele \cite{B2}, suggested an approach involving the
so called nice rational functions.
But the approach of \cite{B1, BR2, B2} does not give explicit algorithms to find the multiplicities of the irreducible characters.
One can apply classical algorithms to find the multiplicity series of a given symmetric function based on
a method of Elliott \cite{E}, improved by MacMahon \cite{Mc} in his ``$\Omega$-Calculus'' or Partition Analysis,
with further improvements and computer realizations, see Andrews, Paule and Riese \cite{APR} and Xin \cite{X}.

Formanek \cite{F2} expressed the Hilbert series of the product of two T-ideals in terms
of Hilbert series of the factors. Berele and Regev \cite{BR1} translated this result in the language of cocharacters.
Hence, in principle, if we know the Hilbert series of $F_d(R_1)$ and $F_d(R_2)$, we can find the multiplicities
of the cocharacter series of $T(R)=T(R_1)T(R_2)$. By a theorem of Maltsev \cite{Ma}
the T-ideal of the algebra $U_k(K)$ of $k\times k$ upper triangular matrices with entries from the field $K$
is
\[
T(U_k(K))=T(K)^k,
\]
where $T(K)$ is the commutator ideal of the free algebra. Boumova and Drensky \cite{BD}
developed methods to express the multiplicity series of the cocharacters of $TU_k(K)$
and evaluated them for small $k$.

In the present paper we consider the algebra $R_{p,q}(K)$ of upper block triangular matrices,
with $p$ blocks of size $1\times 1$ and $q$ blocks of size $2\times 2$ on the main diagonal.
We present a formula for the Hilbert series $H(F_d(R_{p,q}(K)),T_d)$ of the relatively free algebra $F_d(R_{p,q}(K))$.
For small values of $p$ and $q$, we calculate the multiplicity series and the explicit form of the multiplicities.
We have handled the following cases:

\begin{itemize}
\item For $p=1$, $q=1$ the multiplicity series in any number of variables and the multiplicities for arbitrary partitions;

\item For $p=0$, $q=2$ for three variables and partitions in three parts;

\item For $p=0$, $q=3$ and $p=0$, $q=4$ for two variables and partitions in two parts.
\end{itemize}

In all these cases we determine the asymptotic behaviour of the multiplicities $m_{\lambda}(R_{p,q})$.
A key role in our considerations is the calculation of the multiplicity series of the powers of the symmetric function
\[
f=\sum_{n\geq 0}S_{(n,n)}(T_d)
\]
which has the following simple multiplicity series
\[
M(f;T_d)=\frac{1}{1-t_1t_2}.
\]

The results of this paper are part of the Master Thesis \cite{K}
of the second-named author defended in 2011 at the Faculty of Mathematics and Informatics
of the University of Sofia and supervised by the first-named author.

\section{Hilbert series}
To simplify the notation, sometimes we shall omit the variables $T_d$ in the formal power series.
For example, we shall write $H(K \langle X_d \rangle \cap T(R))$, $H(F_d(R))$, etc. instead of
$H(K \langle X_d \rangle \cap T(R), T_d)$, $H(F_d(R),T_d)$.

We use the following result of Formanek \cite{F2}, see Halpin \cite{Ha} for the proof.

\begin{theorem}\label{Theorem of Formanek}
Let $R_1, R_2$ and $R$ be $PI$-algebras such that $T(R)=T(R_1)T(R_2)$. Then the Hilbert series
of $T(R_1)$, $T(R_2)$ and $T(R)$ are related by
\[
H(K \langle X_d \rangle)H(K \langle X_d \rangle \cap T(R))=\frac{1}{1-(t_1+\cdots+t_d)}H(K \langle X_d \rangle \cap T(R))
\]
\[
=H(K \langle X_d \rangle \cap T(R_1))H(K \langle X_d \rangle \cap T(R_2)).
\]
\end{theorem}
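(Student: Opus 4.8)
Throughout write $A=K\langle X_d\rangle$ and $U_i=A\cap T(R_i)$ for $i=1,2$, so that $U=A\cap T(R)$ is the quantity appearing in the middle term of the asserted identity. The first equality is immediate: the monomials in $x_1,\ldots,x_d$ form a multihomogeneous $K$-basis of the free algebra $A$, and grouping them by total degree gives $H(A)=\sum_{n\ge0}(t_1+\cdots+t_d)^n=\tfrac{1}{1-(t_1+\cdots+t_d)}$. It therefore remains to prove the second equality, which after clearing $H(A)$ reads $H(U)=H(U_1)H(U_2)/H(A)$. The plan is to establish this first as a statement about the product of two multigraded two-sided ideals of the free algebra, namely
\[
H(U_1U_2)=\frac{H(U_1)}{H(A)}\,H(U_2),
\]
and then to identify $U$ with the product $U_1U_2$ computed inside $A$.

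For the heart of the argument I would use that, in characteristic $0$, each $T(R_i)$ is closed under taking multihomogeneous components (a Vandermonde/scaling argument), so $U_i$ is a multigraded two-sided ideal of $A$. The key structural input is Cohn's theorem that the free associative algebra $A$ is a free ideal ring, so that every right ideal is a free right $A$-module. Running this in the graded category, I would write $U_1=\bigoplus_\beta e_\beta A$ as a free right module on a multihomogeneous basis $\{e_\beta\}$; comparing Hilbert series gives $\sum_\beta T_d^{\deg e_\beta}=H(U_1)/H(A)$. I then claim $U_1U_2=\bigoplus_\beta e_\beta U_2$. Since $U_2$ is a two-sided ideal, $AU_2=U_2$, and expanding an arbitrary $c\in U_1$ as $c=\sum_\beta e_\beta a_\beta$ shows $U_1U_2=\sum_\beta e_\beta U_2$; the sum is direct and each map $u\mapsto e_\beta u$ is injective precisely because $\bigoplus_\beta e_\beta A$ is free, so a relation $\sum_\beta e_\beta u_\beta=0$ with $u_\beta\in U_2\subseteq A$ forces every $u_\beta=0$. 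As $e_\beta U_2\cong U_2$ as graded spaces up to the degree shift $\deg e_\beta$, taking Hilbert series yields $H(U_1U_2)=\big(\sum_\beta T_d^{\deg e_\beta}\big)H(U_2)=\tfrac{H(U_1)}{H(A)}H(U_2)$, as desired. Equivalently, the directness can be phrased as the vanishing of $\operatorname{Tor}_1^A(A/U_1,U_2)$, which holds because $U_2$ is free, hence flat, as a left $A$-module.

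It then remains to check that $U=A\cap T(R_1)T(R_2)$ coincides with the product $U_1U_2$ formed inside $A$. The inclusion $U_1U_2\subseteq U$ is clear. The reverse inclusion is the delicate point, and I expect it to be the main obstacle: an element of $T(R_1)T(R_2)$ is a sum $\sum_k f_kg_k$ with $f_k\in T(R_1)$, $g_k\in T(R_2)$ possibly involving more than $d$ variables, and one must show that the part lying in $A$ can be rewritten using factors drawn from $U_1$ and $U_2$ alone. The plan is to reduce this to the multilinear setting: in characteristic $0$ the multidegree-$(n_1,\ldots,n_d)$ component of any T-ideal is recovered from its multilinear component in $P_n$ by the standard polarization and restitution isomorphisms, and for multilinear polynomials the product $T(R_1)T(R_2)$ admits an explicit description as the span of products over shuffles of disjoint variable sets, which visibly lies in $U_1U_2$ after relabelling the variables into $x_1,\ldots,x_d$. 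Combining this identification with the product formula above gives $H(A)H(U)=H(U_1)H(U_2)$, completing the proof.
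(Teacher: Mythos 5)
Your proposal is correct, but note that the paper itself does not prove this theorem at all: it quotes it as Formanek's result and explicitly defers the proof to Halpin \cite{Ha}, and the argument in those sources runs along the same lines as yours (freeness of one-sided ideals of the free algebra), so you have essentially reconstructed the cited proof rather than diverged from the paper. Two remarks on your write-up. First, in the product-formula step, Cohn's fir theorem gives \emph{some} free basis of the right ideal $U_1$, whereas to read off Hilbert series you need a \emph{multihomogeneous} basis; this is standard (a graded projective module, bounded below, over a connected graded algebra is graded free -- alternatively, the weak algorithm respects the multigrading), but it deserves a sentence, since otherwise the identity $\sum_\beta T_d^{\deg e_\beta}=H(U_1)/H(A)$ is not literally available. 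Second, the step you single out as ``the delicate point'' and ``the main obstacle,'' namely $K\langle X_d\rangle\cap T(R_1)T(R_2)=U_1U_2$, has a one-line proof that avoids your polarization--restitution detour: let $\pi$ be the endomorphism of $K\langle X\rangle$ with $\pi(x_i)=x_i$ for $i\le d$ and $\pi(x_i)=0$ for $i>d$. Then $\pi$ is the identity on $A=K\langle X_d\rangle$, and since T-ideals are stable under all endomorphisms of $K\langle X\rangle$, we have $\pi(T(R_j))\subseteq T(R_j)\cap A=U_j$; hence any $f\in A$ written as $f=\sum_k f_kg_k$ with $f_k\in T(R_1)$, $g_k\in T(R_2)$ satisfies $f=\pi(f)=\sum_k\pi(f_k)\pi(g_k)\in U_1U_2$. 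This retraction argument disposes of the only genuinely sketchy part of your proposal (your multilinearization plan does work in characteristic zero, but it is heavier than necessary and was left at the level of a plan).
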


\begin{corollary}\label{Corollary of theorem by Formanek}
{\rm (i)} In the notation of the previous theorem
\[
H(F_d(R))=H(F_d(R_1))+H(F_d(R_2))+(t_1+\cdots+t_d-1)H(F_d(R_1))H(F_d(R_2));
\]
{\rm (ii) (Berele and Regev \cite{BR1})}
If $\chi_n(R_1)$ and $\chi_n(R_2)$, $n=0,1,2,\ldots$, are, respectively, the cocharacter sequences of the algebras $R_1$ and $R_2$,
then the cocharacter sequence of the algebra $R$ with T-ideal $T(R)=T(R_1)T(R_2)$ is
\[
\chi_n(R)=\chi_n(R_1)+\chi_n(R_2)+\chi_{(1)}\widehat\otimes\sum_{j=0}^{n-1}\chi_j(R_1)\widehat\otimes\chi_{n-j-1}(R_2)
\]
\[
-\sum_{j=0}^n\chi_j(R_1)\widehat\otimes\chi_{n-j}(R_2),
\]
where $\widehat\otimes$ denotes the ``outer'' tensor product of characters.
\end{corollary}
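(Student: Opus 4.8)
The plan is to deduce both parts directly from the Theorem of Formanek by formal manipulation of power series, handling the passage to characters through the dictionary $S_\lambda(T_d)\leftrightarrow\chi_\lambda$ between Schur functions and irreducible $S_n$-characters.

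For part (i), I would start from the fact that, as graded vector spaces, $F_d(R)=K\langle X_d\rangle/(K\langle X_d\rangle\cap T(R))$, so that
\[
H(F_d(R))=H(K\langle X_d\rangle)-H(K\langle X_d\rangle\cap T(R)),
\]
and similarly for $R_1$ and $R_2$. Writing $u=H(K\langle X_d\rangle)=1/(1-(t_1+\cdots+t_d))$, this reads $H(K\langle X_d\rangle\cap T(R))=u-H(F_d(R))$ and $H(K\langle X_d\rangle\cap T(R_i))=u-H(F_d(R_i))$. Substituting into the Formanek relation $u\,H(K\langle X_d\rangle\cap T(R))=H(K\langle X_d\rangle\cap T(R_1))H(K\langle X_d\rangle\cap T(R_2))$ gives
\[
u\bigl(u-H(F_d(R))\bigr)=\bigl(u-H(F_d(R_1))\bigr)\bigl(u-H(F_d(R_2))\bigr).
\]
Expanding, the $u^2$ terms cancel; dividing the remainder by $u$ and using $-1/u=t_1+\cdots+t_d-1$ produces exactly the formula of (i). This step is entirely routine.

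For part (ii), I would translate (i) through the correspondence that sends the symmetric function $\sum_\lambda m_\lambda(R)S_\lambda(T_d)=H(F_d(R))$ to the cocharacter sequence $\sum_n\chi_n(R)$, using that the multiplicities agree by the result of Berele and Drensky quoted above. The essential point is that this correspondence is multiplicative: a product $S_\mu(T_d)S_\nu(T_d)$ corresponds to the outer tensor product $\chi_\mu\widehat\otimes\chi_\nu$, and the linear factor $t_1+\cdots+t_d=S_{(1)}(T_d)$ corresponds to $\chi_{(1)}$. Choosing $d$ at least as large as $n$ so that every partition $\lambda\vdash n$ is represented, I would extract the homogeneous component of degree $n$ from the identity in (i): the degree-$n$ parts of $H(F_d(R_i))$ yield $\chi_n(R_i)$; the degree-$n$ part of $H(F_d(R_1))H(F_d(R_2))$ is the convolution $\sum_{j=0}^n\chi_j(R_1)\widehat\otimes\chi_{n-j}(R_2)$; and multiplication by $t_1+\cdots+t_d$ shifts degree by one while prepending $\chi_{(1)}\widehat\otimes$ to the convolution running only up to $j=n-1$. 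Collecting the four contributions reproduces the stated expression for $\chi_n(R)$.

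The hard part will not be the bookkeeping but justifying the multiplicative dictionary: the fact that products of Schur functions correspond to outer tensor products of symmetric-group characters is precisely the statement that the Frobenius characteristic is a ring homomorphism, equivalently the Littlewood--Richardson rule. A secondary subtlety is the restriction to finitely many variables: since $S_\lambda(T_d)=0$ when $\lambda$ has more than $d$ parts, one must keep $d$ large enough relative to the degree $n$ considered so that no partition is lost when reading off the full cocharacter sequence from the finite-variable Hilbert series.
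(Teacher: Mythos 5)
Your proposal is correct and takes essentially the same route as the paper: part (i) by substituting $H(K\langle X_d\rangle\cap T(\cdot))=H(K\langle X_d\rangle)-H(F_d(\cdot))$ into Formanek's relation and simplifying, and part (ii) by passing through the Schur-function/character dictionary (products of Schur functions correspond to outer tensor products via the Littlewood--Richardson rule, with $t_1+\cdots+t_d=S_{(1)}$) and extracting the degree-$n$ component for $d$ large enough. Your write-up is in fact more detailed than the paper's own two-line argument, but the ideas coincide.
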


The first part of the corollary follows immediately from Theorem \ref{Theorem of Formanek}
taking into account that
\[
H(K\langle X_d\rangle)=H(F_d(R))+H(K\langle X_d\rangle\cap T(R))
\]
and similarly for $R_1$ and $R_2$. In the second part of the corollary,
the outer product of irreducible characters corresponds to the Littlewood-Richardson rule for products of Schur functions. If
\[
S_{\lambda}(T_d)S_{\mu}(T_d)=\sum_{\nu\vdash |\lambda|+|\mu|}c_{\lambda\mu}^{\nu}S_{\nu}(T_d),
\]
\[
\lambda=(\lambda_1,\ldots,\lambda_p),\quad \mu=(\mu_1,\ldots,\mu_q),\quad d\geq p+q,
\]
then
\[
\chi_{\lambda}\widehat\otimes\chi_{\mu}=\sum_{\nu\vdash |\lambda|+|\mu|}c_{\lambda\mu}^{\nu}\chi_{\nu}.
\]

We obtain the following corollary.

\begin{corollary}\label{Hilbert series of product of several ideals}
Let $R_1,\ldots,R_n$ and $R$ be PI-algebras such that $T(R)=T(R_1)\cdots T(R_n)$.

{\rm (i)} The Hilbert series of the relatively free algebra $F_d(R)$ is given by the formula
\[
H(F_d(R))=\sum_{k=1}^{n}(t_1+\cdots+t_d-1)^{k-1}
\sum_{1\leq i_1<\cdots<i_k\leq n}H(F_d(R_{i_1}))\cdots H(F_d(R_{i_k})).
\]

{\rm (ii)} For a permutation $\sigma\in S_n$ let $R_{\sigma}$ be an algebra with T-ideal
$T(R_{\sigma})=T(R_{\sigma(1)})\cdots T(R_{\sigma(n)})$. Then
$H(F_d(R_{\sigma}))=H(F_d(R))$.

{\rm (iii)}
If $R_1=\cdots=R_n$, then the above formula has the form
\begin{equation}\label{n equal}
H(F_d(R))=\sum_{k=1}^{n}\binom{n}{k}(t_1+\cdots+t_d-1)^{k-1}H(F_d(R_1))^k.
\end{equation}
\end{corollary}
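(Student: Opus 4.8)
The plan is to prove (i) by induction on $n$, taking Corollary \ref{Corollary of theorem by Formanek}(i) as the engine of the inductive step, and then to obtain (ii) and (iii) at once from the shape of the resulting formula. Throughout I abbreviate $s = t_1 + \cdots + t_d - 1$ and $h_i = H(F_d(R_i))$, and for $1 \leq m \leq n$ I write
\[
\sigma_k^{(m)} = \sum_{1 \leq i_1 < \cdots < i_k \leq m} h_{i_1}\cdots h_{i_k}
\]
for the $k$-th elementary symmetric sum of $h_1, \ldots, h_m$, with the conventions $\sigma_0^{(m)} = 1$ and $\sigma_k^{(m)} = 0$ for $k > m$. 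In this notation (i) asserts $H(F_d(R)) = \sum_{k=1}^n s^{k-1}\sigma_k^{(n)}$.

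The base case $n = 1$ is immediate, and $n = 2$ is exactly Corollary \ref{Corollary of theorem by Formanek}(i), the $k = 1$ term giving $\sigma_1^{(2)} = h_1 + h_2$ and the $k = 2$ term giving $s\,\sigma_2^{(2)} = s\,h_1 h_2$. For the inductive step ($n \geq 2$) I put $T(R') = T(R_1)\cdots T(R_{n-1})$, so that $T(R) = T(R')\,T(R_n)$, and apply Corollary \ref{Corollary of theorem by Formanek}(i) to the pair $(R', R_n)$:
\[
H(F_d(R)) = H(F_d(R')) + h_n + s\,h_n\,H(F_d(R')).
\]
By the induction hypothesis $H(F_d(R')) = \sum_{k=1}^{n-1} s^{k-1}\sigma_k^{(n-1)}$. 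Substituting this and reindexing the last term (the factor $s\,h_n$ shifts $s^{k-1}$ to $s^k$) rewrites the right-hand side as
\[
\sum_{k=1}^{n-1} s^{k-1}\sigma_k^{(n-1)} + h_n + \sum_{k=2}^{n} s^{k-1}\,h_n\,\sigma_{k-1}^{(n-1)}.
\]
The crux is the Pascal-type recurrence $\sigma_k^{(n)} = \sigma_k^{(n-1)} + h_n\,\sigma_{k-1}^{(n-1)}$, which comes from splitting the index sets $\{i_1 < \cdots < i_k\}$ according to whether $i_k = n$. Collecting the coefficient of $s^{k-1}$ for each $k$ and invoking this recurrence — together with the boundary identities $\sigma_1^{(n)} = \sigma_1^{(n-1)} + h_n$ and $\sigma_n^{(n)} = h_n\,\sigma_{n-1}^{(n-1)}$ (using $\sigma_n^{(n-1)} = 0$) — gives $\sum_{k=1}^n s^{k-1}\sigma_k^{(n)}$, completing the induction.

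Parts (ii) and (iii) then require no extra work. Each $\sigma_k^{(n)}$ is a symmetric function of $h_1, \ldots, h_n$, so the right-hand side of (i) is invariant under permuting the factors; permuting $T(R_{\sigma(1)}), \ldots, T(R_{\sigma(n)})$ therefore leaves the Hilbert series unchanged, proving $H(F_d(R_\sigma)) = H(F_d(R))$. For (iii), when $h_1 = \cdots = h_n = H(F_d(R_1))$ each of the $\binom{n}{k}$ index sets contributes the same monomial, so $\sigma_k^{(n)} = \binom{n}{k}H(F_d(R_1))^k$, and substitution into (i) yields \eqref{n equal}. The only genuinely fiddly point in the whole argument is the index bookkeeping in the inductive step; once the recurrence for $\sigma_k^{(n)}$ is in place, everything collapses cleanly.
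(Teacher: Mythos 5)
Your proof is correct and follows essentially the same route as the paper: the paper proves (i) by ``applying several times'' Corollary \ref{Corollary of theorem by Formanek}(i), which is exactly the induction you carry out (with the Pascal-type recurrence for the elementary symmetric sums spelled out explicitly), and your treatments of (ii) (symmetry of the formula) and (iii) (counting the $\binom{n}{k}$ equal summands) coincide with the paper's.
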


\begin{proof}
The equation in (i) follows directly applying several times Corollary \ref{Corollary of theorem by Formanek} (i).
Part (ii) is an obvious consequence of (i).
To derive (iii), we note that the number of summands $H(F_d(R_{i_1}))\cdots H(F_d(R_{i_k}))$
in each of the sums in the first equation is $\binom{n}{k}$. Hence, for $R_1=R_2=\cdots=R_n$ we obtain (iii).
\end{proof}

Let $R_1$ and $R_2$ be PI-algebras and let $M$ be an $R_1$-$R_2$-bimodule. Then the polynomial identities of the algebra
\[
R=\left(\begin{matrix}
R_1&M\\
0&R_2\\
\end{matrix}\right)
\]
satisfy $T(R)\supseteq T(R_1)T(R_2)$. Lewin \cite{L} found conditions which guarantee that $T(R)=T(R_1)T(R_2)$.
A nontrivial consequence of the result of Lewin is the following result of Giambruno and Zaicev \cite{GZ}
which is crucial for our subsequent considerations. Let $d_1,\ldots,d_m$ be positive integers and let $U(d_1,\ldots,d_m)$
be the algebra of upper block triangular matrices of the form
\[
\begin{pmatrix}
M_{d_1}(K) &\ast & \dots &\ast & \ast \\
0 & M_{d_2}(K) &\dots &\ast &\ast \\
\vdots & \vdots& \ddots &\vdots & \vdots \\
0&0&\dots&M_{d_{m-1}}(K)&\ast\\
0 &0& \dots & 0 & M_{d_m}(K)\\
\end{pmatrix},
\]

\bigskip
\noindent where $M_{d_i}(K)$ is the algebra of $d_i\times d_i$-matrices over $K$.

\begin{proposition}\label{Giambruno and Zaicev}{\rm (Giambruno and Zaicev \cite{GZ})}
The T-ideal of the polynomial identities of the algebra $U(d_1,\ldots,d_m)$ of upper block triangular matrices
equals the product of T-ideals of all matrices $M_{d_i}(K)$, $i=1,\ldots,m$:
\[
T(U(d_1,\ldots,d_m))=T(M_{d_1}(K))\cdots T(M_{d_m}(K)).
\]
\end{proposition}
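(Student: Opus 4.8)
The statement asserts an equality of two T-ideals, and the plan is to prove the two opposite inclusions separately. The inclusion $T(U(d_1,\ldots,d_m))\supseteq T(M_{d_1}(K))\cdots T(M_{d_m}(K))$ is the elementary one and is handled by induction on $m$. Writing $U(d_1,\ldots,d_m)=\left(\begin{smallmatrix} M_{d_1}(K) & M\\ 0 & U(d_2,\ldots,d_m)\end{smallmatrix}\right)$, where $M$ is the rectangular off-diagonal block viewed as an $M_{d_1}(K)$-$U(d_2,\ldots,d_m)$-bimodule, the remark preceding the statement gives $T(U(d_1,\ldots,d_m))\supseteq T(M_{d_1}(K))\,T(U(d_2,\ldots,d_m))$; iterating and using $T(U(d_m))=T(M_{d_m}(K))$ yields the product.

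For the reverse inclusion — the substantial one — I would again peel off the top block and reduce everything to the two-factor statement $T\left(\begin{smallmatrix} A & M\\ 0 & B\end{smallmatrix}\right)\subseteq T(A)T(B)$, applied with $A=M_{d_1}(K)$ and $B=U(d_2,\ldots,d_m)$ and then closed by induction on $m$. The engine here is the theorem of Lewin \cite{L}: for the relatively free algebras $G_1=K\langle X\rangle/T(A)$ and $G_2=K\langle X\rangle/T(B)$ and the \emph{free} $G_1$-$G_2$-bimodule $W$ of countable rank, the triangular algebra $L=\left(\begin{smallmatrix} G_1 & W\\ 0 & G_2\end{smallmatrix}\right)$ satisfies $T(L)=T(A)\,T(B)$. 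Thus it suffices to prove $T\left(\begin{smallmatrix} A & M\\ 0 & B\end{smallmatrix}\right)\subseteq T(L)$, that is, that $L$ lies in the variety generated by our concrete block triangular algebra.

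The main obstacle, and the technical heart supplied by Giambruno and Zaicev \cite{GZ}, is exactly this last inclusion, because the bimodule $M=M_{d_1\times(d_2+\cdots+d_m)}(K)$ occurring in the matrices is finite dimensional and simple, hence very far from the free bimodule $W$ demanded by Lewin. I would bridge the gap through faithfulness: $M$ is faithful as a left $M_{d_1}(K)$-module and as a right $U(d_2,\ldots,d_m)$-module, and consequently $M$ is a faithful module over the enveloping algebra $M_{d_1}(K)\otimes_K U(d_2,\ldots,d_m)^{\mathrm{op}}$ (faithfulness of a tensor action over a field). This faithfulness provides enough bimodule homomorphisms $W\to M$ to separate the points of $W$, so that $W$ embeds into a direct sum of copies of $M$ (finitely many in each bounded degree); together with $G_1\in\mathrm{var}(M_{d_1}(K))$ and $G_2\in\mathrm{var}(U(d_2,\ldots,d_m))$ this realizes $L$ inside a direct sum of copies of $\left(\begin{smallmatrix} A & M\\ 0 & B\end{smallmatrix}\right)$, whence $L\in\mathrm{var}(U(d_1,\ldots,d_m))$ and $T(U(d_1,\ldots,d_m))\subseteq T(L)=T(A)T(B)$. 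Verifying this embedding carefully — checking that the generic free data can be compatibly found inside powers of the concrete matrix algebra, and that the same faithfulness persists when the lower factor is itself block triangular — is where essentially all the work lies; the induction on $m$ and the assembly of the two inclusions are then routine.
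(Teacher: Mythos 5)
The paper contains no proof of this proposition: it is quoted from Giambruno and Zaicev \cite{GZ}, with only the remark that it is ``a nontrivial consequence of the result of Lewin'' \cite{L}. Your proposal reconstructs exactly the argument that remark points to, and in outline it is correct: the elementary inclusion $T(M_{d_1}(K))\cdots T(M_{d_m}(K))\subseteq T(U(d_1,\ldots,d_m))$ by induction on the blocks, and the hard inclusion by comparing $U(d_1,\ldots,d_m)$ with Lewin's model $L=\left(\begin{smallmatrix} G_1 & W\\ 0 & G_2\end{smallmatrix}\right)$ and proving $L\in\mathrm{var}(U(d_1,\ldots,d_m))$. The published proof realizes Lewin's free bimodule concretely by generic rectangular matrices over a polynomial ring (so that $L$ sits inside $U(d_1,\ldots,d_m)\otimes_K C$ for a commutative $C$); your ``separating homomorphisms'' formulation, embedding $L$ into a direct power of $U(d_1,\ldots,d_m)$, is the subdirect-product dual of the same idea, hinging on the same two facts: Lewin's theorem and faithfulness of the off-diagonal block over the enveloping algebra.

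Two points in your sketch need repair in a full write-up. First, the inference ``$M$ is faithful as a left $A$-module and as a right $B$-module, and consequently faithful over $A\otimes_K B^{\mathrm{op}}$'' is false as a general principle: for $A=B=K\times K$ and $M=A$ as a bimodule, $M$ is faithful on both sides, yet $e_1\otimes e_2$ (with $e_1=(1,0)$, $e_2=(0,1)$) acts as zero. What saves you is the special shape of the block: $M\cong K^{d_1}\otimes (K^{n'})^{*}$ with $n'=d_2+\cdots+d_m$ is a \emph{tensor product} of the two natural faithful one-sided modules, so $A\otimes_K B^{\mathrm{op}}\hookrightarrow \mathrm{End}(K^{d_1})\otimes\mathrm{End}((K^{n'})^{*})\cong\mathrm{End}_K(M)$; this is presumably what your parenthetical ``faithfulness of a tensor action'' means, and it is also exactly why faithfulness persists when the lower factor $B$ is block triangular. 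Second, ``faithfulness provides enough bimodule homomorphisms $W\to M$ to separate points'' conceals a real step: a nonzero coefficient $\sum_j a_j\otimes b_j\in G_1\otimes G_2$ of a free generator of $W$ must survive specialization under \emph{some} pair of homomorphisms $\varphi\colon G_1\to A$, $\psi\colon G_2\to B$. Since every single such homomorphism has huge kernel, this does not follow from ``homomorphisms separate points''; one needs the standard finite-dimensionality argument (linear independence of finitely many elements of $G_i\hookrightarrow\prod A$, resp.\ $\prod B$, is witnessed by finitely many coordinates, and then one tensors those finitely many coordinates). Both repairs are routine, so I would count your proposal as a correct reconstruction of the cited proof rather than as containing a genuine gap.
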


Now let us consider the algebra $R_{p,q}=R_{p,q}(K)=U(d_1,\ldots,d_{p+q})$ with diagonal consisting of $p$ copies
of $1\times 1$ and $q$ copies of $2\times 2$ blocks only.

\begin{proposition}\label{Algebra Rpq}
The Hilbert series $H(F_d(R_{p,q}))$ of the algebra $F_d(R_{p,q})$ is
\[
H(F_d(R_{p,q}))=\sum_{i=1}^{p} \binom{p}{i}(t_1+\cdots+t_d-1)^{i-1}H(K[X_d])^i
\]
\[
+\sum_{j=1}^{q}\binom{q}{j}(t_1+\cdots+t_d-1)^{j-1}H(F_d(M_2(K)))^j
\]
\[
+\sum_{i=1}^{p}\sum_{j=1}^{q}\binom{p}{i}\binom{q}{j}(t_1+\cdots+t_d-1)^{i+j-1}H(K[X_d])^iH(F_d(M_2(K)))^j.
\]
\end{proposition}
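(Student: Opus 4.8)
The plan is to combine the Giambruno--Zaicev description of $T(R_{p,q})$ with the explicit product formula of Corollary \ref{Hilbert series of product of several ideals}. First I would write $R_{p,q}=U(d_1,\ldots,d_{p+q})$, where exactly $p$ of the $d_i$ equal $1$ and exactly $q$ of them equal $2$. Proposition \ref{Giambruno and Zaicev} then gives
\[
T(R_{p,q})=T(M_{d_1}(K))\cdots T(M_{d_{p+q}}(K)),
\]
a product of $n=p+q$ T-ideals. Since $M_1(K)=K$ generates the variety of commutative algebras, we have $F_d(M_1(K))=K[X_d]$ and hence $H(F_d(M_1(K)))=H(K[X_d])$. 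Thus the factors $R_i$ are of only two kinds: $p$ of them contribute $H(K[X_d])$ and $q$ of them contribute $H(F_d(M_2(K)))$.

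Next I would apply Corollary \ref{Hilbert series of product of several ideals}(i) with $n=p+q$, which expresses $H(F_d(R_{p,q}))$ as
\[
\sum_{k=1}^{p+q}(t_1+\cdots+t_d-1)^{k-1}\sum_{1\leq i_1<\cdots<i_k\leq p+q}H(F_d(R_{i_1}))\cdots H(F_d(R_{i_k})).
\]
The key observation, justified by the order-independence in Corollary \ref{Hilbert series of product of several ideals}(ii), is that the value of each product $H(F_d(R_{i_1}))\cdots H(F_d(R_{i_k}))$ depends only on how many of the chosen indices point to a $1\times 1$ block and how many point to a $2\times 2$ block, not on which particular indices are selected.

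The main step, which is purely combinatorial, is then to regroup the inner sum. If a chosen $k$-subset contains $i$ indices of the first kind and $j=k-i$ of the second kind, its contribution is $H(K[X_d])^iH(F_d(M_2(K)))^j$, and there are exactly $\binom{p}{i}\binom{q}{j}$ such subsets. Replacing the summation over $k$-subsets by a summation over pairs $(i,j)$ with $0\leq i\leq p$, $0\leq j\leq q$, $i+j=k\geq 1$, and using $(t_1+\cdots+t_d-1)^{k-1}=(t_1+\cdots+t_d-1)^{i+j-1}$, I would obtain
\[
H(F_d(R_{p,q}))=\sum_{\substack{0\leq i\leq p,\ 0\leq j\leq q\\ i+j\geq 1}}\binom{p}{i}\binom{q}{j}(t_1+\cdots+t_d-1)^{i+j-1}H(K[X_d])^iH(F_d(M_2(K)))^j.
\]

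Finally I would split this single sum according to the three mutually exclusive cases $j=0$ (with $i\geq 1$), $i=0$ (with $j\geq 1$), and $i,j\geq 1$, which partition the index set $\{(i,j):i+j\geq 1\}$ and produce, respectively, the three displayed sums in the statement; here the first case uses $\binom{q}{0}=1$ and $H(F_d(M_2(K)))^0=1$, and symmetrically for the second. No step presents a genuine difficulty: the only points requiring care are the identification $F_d(M_1(K))=K[X_d]$ and the bookkeeping of the binomial coefficients $\binom{p}{i}\binom{q}{j}$ in the regrouping, which is where essentially all of the content of the proof lies.
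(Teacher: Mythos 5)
Your proof is correct, and it reaches the formula by a somewhat different route than the paper. After the common first step (Proposition \ref{Giambruno and Zaicev}), the paper does not expand the sum over all subsets of the $p+q$ factors: it invokes Corollary \ref{Hilbert series of product of several ideals}(ii) to reorder the diagonal blocks, splits $T(R'_{p,q})=T(R_{(1)})T(R_{(2)})$ with $T(R_{(1)})=T(M_1(K))^p$ and $T(R_{(2)})=T(M_2(K))^q$, applies the two-factor formula of Corollary \ref{Corollary of theorem by Formanek}(i) to this splitting, and then computes each of $H(F_d(R_{(1)}))$ and $H(F_d(R_{(2)}))$ by the equal-factor binomial formula of Corollary \ref{Hilbert series of product of several ideals}(iii); the three displayed sums are then precisely $H(F_d(R_{(1)}))$, $H(F_d(R_{(2)}))$, and the cross term. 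You instead apply Corollary \ref{Hilbert series of product of several ideals}(i) to all $p+q$ factors at once and do the double counting yourself: a $k$-subset with $i$ indices of $1\times 1$ type and $j=k-i$ of $2\times 2$ type contributes $H(K[X_d])^iH(F_d(M_2(K)))^j$, there are $\binom{p}{i}\binom{q}{j}$ such subsets, and the split into the cases $j=0$, $i=0$, and $i,j\geq 1$ yields the three sums. Both arguments are complete and of comparable length; yours re-derives in one counting what the paper assembles from parts (ii) and (iii), and it has the incidental advantage of bypassing the intermediate display in the paper's proof (the equation $H(F_d(R_{p,q}))=H(F_d(M_1(K)))^pH(F_d(M_2(K)))^q$ there is a misprint for the assertion that the Hilbert series is independent of the ordering of the blocks). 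One small correction: your appeal to Corollary \ref{Hilbert series of product of several ideals}(ii) to justify that $H(F_d(R_{i_1}))\cdots H(F_d(R_{i_k}))$ depends only on how many factors of each kind occur is superfluous --- that is simply commutativity of multiplication of power series; part (ii) is the stronger statement about $H(F_d(R_\sigma))$ itself, which your argument does not need (and in effect reproves).
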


\begin{proof}
By Proposition \ref{Giambruno and Zaicev}
we conclude that
\[
T(R_{p,q})=T(U(d_1,\ldots,d_{p+q}))=T(M_{d_1}(K))\cdots T(M_{d_{p+q}}(K)),
\]
where $p$ of the integers $d_i$ are equal to 1 and the other $q$ are equal to 2.
Corollary \ref{Hilbert series of product of several ideals} (ii) implies that
\[
H(F_d(R_{p,q}))=H(F_d(M_1(K)))^pH(F_d(M_2(K)))^q.
\]
Hence the Hilbert series of $F_d(R_{p,q})$ is the same as the Hilbert series of
$F_d(R'_{p,q})$, where
\[
R'_{p,q}=U(\underbrace{1,\ldots,1}_{p\text{ times}},\underbrace{2\ldots,2}_{q\text{ times}})
\]
and $T(R'_{p,q})=T(R_{(1)})T(R_{(2)})$ with $T(R_{(1)})=T(M_1(K))^p$, $T(R_{(2)})=T(M_2(K))^q$.
Then by Corollary \ref{Corollary of theorem by Formanek} (i) we have
\[
H(F_d(R_{p,q}))=H(F_d(R_{(1)}))+H(F_d(R_{(2)})
\]
\[
+(t_1+\cdots+t_d-1)H(F_d(R_{(1)}))H(F_d(R_{(2)})).
\]
To complete the proof we use Corollary \ref{Hilbert series of product of several ideals} (iii)
taking into account that $F_d(M_1(K))=K[X_d]$, the polynomial algebra in $d$ commuting variables.
\end{proof}

\section{Symmetric functions}

For a background on symmetric functions see the book by Macdonald \cite{McD}.
Recall that one of the ways to define the Schur function $S_{\lambda}=S_{\lambda}(T_d)$ is as a fraction of determinants of
Vandermonde type
\[
S_{\lambda}(T_d)=\frac{V(\lambda+\delta)}{V(\delta)},
\]
where $\delta=(d-1,\ldots,2,1)$ and for $\mu=(\mu_1,\ldots,\mu_d)$
\[
V(\mu,T_d)=\left\vert\begin{matrix}
t_1^{\mu_1}&t_2^{\mu_1}&\cdots&t_d^{\mu_1}\\
&&&\\
t_1^{\mu_2}&t_2^{\mu_2}&\cdots&t_d^{\mu_2}\\
&&&\\
\vdots&\vdots&\ddots&\vdots\\
&&&\\
t_1^{\mu_d}&t_2^{\mu_d}&\cdots&t_d^{\mu_d}\\
\end{matrix}\right\vert.
\]
It is well known that Schur functions form a basis of the algebra of symmetric polynomials
${\mathbb C}[T_d]^{S_d}$. Hence, if $f(T_d)={\mathbb C}[[T_d]]^{S_d}$ is a symmetric function
in $d$ variables which is a formal power series, then
\[
f(T_d)=\sum_{\lambda}m_{\lambda}S_{\lambda}(T_d),\quad m_{\lambda}\in{\mathbb C}.
\]
We associate with $f(T_d)$ its multiplicity series
\[
M(f;T_d)=\sum_{\lambda}m_{\lambda}T_d^{\lambda}
=\sum_{\lambda}m_{\lambda}t_1^{\lambda_1}\cdots t_d^{\lambda_d}\in {\mathbb C}[[T_d]].
\]
We consider also the subalgebra ${\mathbb C}[[V_d]]\subset {\mathbb C}[[T_d]]$
of the formal power series in the new set of variables $V_d=\{v_1,\ldots,v_d\}$, where
\[
v_1=t_1,v_2=t_1t_2,\ldots,v_d=t_1\cdots t_d.
\]
Then the multiplicity series $M(f;T_d)$ can be written as
\[
M'(f;V_d)=\sum_{\lambda}m_{\lambda}v_1^{\lambda_1-\lambda_2}\cdots
v_{d-1}^{\lambda_{d-1}-\lambda_d}v_d^{\lambda_d}\in {\mathbb C}[[V_d]]
\]
and the mapping
$M':{\mathbb C}[[T_d]]^{S_d}\to {\mathbb C}[[V_d]]$ defined by
$M':f(T_d)\to M'(f;V_d)$ is a bijection.
For a PI-algebra $R$ we define the multiplicity series of $R$
\[
M(R;T_d)=M(R;t_1,\ldots,t_d)= \sum_{\lambda}m_{\lambda}(R)T_d^{\lambda}
=\sum_{\lambda}m_{\lambda}(R)t_1^{\lambda_1}\cdots t_d^{\lambda_d}.
\]
Similarly we define the series $M'(R;V_d)$.

\begin{lemma}\label{relation between H and M}
{\rm (Berele \cite{B1})}
The functions $f(T_d)\in {\mathbb C}[[T_d]]^{S_d}$ and $M(f;T_d)$ are related in the following way:

{\rm (i)} If
\[
f(T_d)\prod_{i<j}(t_i-t_j)=\sum_{p_i\geq 0}b(p_1,\ldots,p_d)t_1^{p_1}\cdots t_d^{p_d},
\quad b(p_1,\ldots,p_d)\in {\mathbb C},
\]
then
\[
M(f;T_d)=\frac{1}{t_1^{d-1}\cdots t_{d-2}^2t_{d-1}}
\sum_{p_i>p_{i+1}}b(p_1,\ldots,p_d)t_1^{p_1}\cdots t_d^{p_d},
\]
where the summation is on all $p=(p_1,\ldots,p_d)$ such that
$p_1>p_2>\cdots>p_d$.

{\rm (ii)} The formal power series
\[
h(T_d)=\sum h(q_1,\ldots,q_d)t_1^{q_1}\cdots t_d^{q_d},\quad
q_1\geq\cdots\geq q_d,
\]
is equal to the multiplicity series $M(f;T_d)$ of $f(T_d)$ if and only if
\[
f(T_d)\prod_{i<j}(t_i-t_j)=\sum_{\sigma\in S_d}\text{\rm sign}(\sigma)
t_{\sigma(1)}^{d-1}t_{\sigma(2)}^{d-2}\cdots t_{\sigma(d-1)}
h(t_{\sigma(1)},\ldots,t_{\sigma(d)}).
\]
\end{lemma}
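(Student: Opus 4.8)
The plan is to work directly from the determinant definition of the Schur function and to track which monomials in the antisymmetrized product $f(T_d)\prod_{i<j}(t_i-t_j)$ encode the multiplicities $m_{\lambda}$. The starting point is the identity $S_{\lambda}(T_d)V(\delta)=V(\lambda+\delta)$ together with the permutation expansion of the Vandermonde-type determinant,
\[
V(\lambda+\delta)=\sum_{\sigma\in S_d}\mathrm{sign}(\sigma)\,t_{\sigma(1)}^{\lambda_1+d-1}t_{\sigma(2)}^{\lambda_2+d-2}\cdots t_{\sigma(d)}^{\lambda_d}.
\]
Multiplying $f(T_d)=\sum_{\lambda}m_{\lambda}S_{\lambda}(T_d)$ by $\prod_{i<j}(t_i-t_j)=V(\delta)$ then gives $f(T_d)V(\delta)=\sum_{\lambda}m_{\lambda}V(\lambda+\delta)$, which is exactly the power series whose coefficients are the $b(p_1,\dots,p_d)$ of part (i). Everything below is read off coefficient by coefficient, which is legitimate since $\prod_{i<j}(t_i-t_j)$ is a polynomial and each homogeneous component of $f(T_d)V(\delta)$ receives contributions from only finitely many $\lambda$.

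The key observation concerns exponents. For a partition $\lambda$ the sequence $p_i=\lambda_i+d-i$ is \emph{strictly} decreasing, since $\lambda_i\ge\lambda_{i+1}$ forces $p_i-p_{i+1}=\lambda_i-\lambda_{i+1}+1\ge 1$; conversely every strictly decreasing $p_1>\cdots>p_d\ge 0$ arises from the unique partition $\lambda_i=p_i-(d-i)$. I would then show that a monomial $t_1^{p_1}\cdots t_d^{p_d}$ with $p_1>\cdots>p_d$ occurs in $V(\lambda+\delta)$ only through the identity permutation: in the $\sigma$-term the exponent of $t_j$ equals $p_{\sigma^{-1}(j)}$, and because the $p_i$ are strictly decreasing this exponent sequence is decreasing in $j$ exactly when $\sigma=\mathrm{id}$. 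Hence among all pairs $(\lambda,\sigma)$ the strictly decreasing monomials are hit precisely once, with sign $+1$, so $b(p_1,\dots,p_d)=m_{\lambda}$ for $\lambda_i=p_i-(d-i)$. Substituting $t_1^{\lambda_1}\cdots t_d^{\lambda_d}=t_1^{p_1-(d-1)}\cdots t_{d-1}^{p_{d-1}-1}t_d^{p_d}$ into $M(f;T_d)=\sum_{\lambda}m_{\lambda}t_1^{\lambda_1}\cdots t_d^{\lambda_d}$ and factoring out $t_1^{-(d-1)}\cdots t_{d-1}^{-1}$ yields the formula of (i).

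For (ii) I would argue that the map sending a series $g$ to $\sum_{\sigma}\mathrm{sign}(\sigma)\,t_{\sigma(1)}^{d-1}\cdots t_{\sigma(d-1)}\,g(t_{\sigma(1)},\dots,t_{\sigma(d)})$ is injective on series supported on weakly decreasing exponents, and that $M(f;T_d)$ solves the displayed equation. The ``if'' direction is direct substitution: replacing $h$ by $M(f;T_d)=\sum_{\lambda}m_{\lambda}t_1^{\lambda_1}\cdots t_d^{\lambda_d}$, the factor $t_{\sigma(1)}^{d-1}\cdots t_{\sigma(d-1)}$ shifts the exponent vector by $\delta$, so the $\sigma$-sum reproduces $\sum_{\lambda}m_{\lambda}V(\lambda+\delta)=f(T_d)V(\delta)$. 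For the ``only if'' direction I would compare, on both sides of the equation, the coefficients of the strictly decreasing monomials: a weakly decreasing exponent vector $q$ of $h$ becomes strictly decreasing after adding $\delta$, only $\sigma=\mathrm{id}$ produces a strictly decreasing monomial, and matching the resulting coefficient with the $b(p)=m_{\lambda}$ already computed forces the coefficient of $t_1^{\lambda_1}\cdots t_d^{\lambda_d}$ in $h$ to equal $m_{\lambda}$; since both $h$ and $M(f;T_d)$ are supported on partitions, this gives $h=M(f;T_d)$.

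The main obstacle is the bookkeeping in the key observation: one must check that, over \emph{all} partitions and \emph{all} permutations at once, no cancellation or coincidence produces spurious contributions to the strictly decreasing monomials. The cleanest formulation is the two facts used above---that adding $\delta$ turns weak inequalities into strict ones, and that a strictly decreasing exponent vector in the natural order of the variables singles out the identity permutation---after which both parts reduce to reading off coefficients. The remainder is the elementary rescaling by $t_1^{d-1}\cdots t_{d-1}$ and a check that the normalizations in (i) and (ii) agree.
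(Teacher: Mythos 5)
Your proof is correct, but there is nothing in the paper to compare it against: the paper states this lemma without any proof, quoting it as a result of Berele \cite{B1}. So your argument does not parallel the paper's proof --- it supplies one. It is the standard bialternant argument, and both halves are sound. Writing $f(T_d)=\sum_\lambda m_\lambda S_\lambda(T_d)$ and using $S_\lambda V(\delta)=V(\lambda+\delta)$ turns the left-hand side into $\sum_\lambda m_\lambda V(\lambda+\delta)$, which is well defined degree by degree; the two facts you isolate --- that $\lambda\mapsto\lambda+\delta$ is a bijection from weakly decreasing onto strictly decreasing exponent vectors, and that in the permutation expansion of $V(\lambda+\delta)$ a monomial with strictly decreasing exponents (in the natural order of the variables) can only come from $\sigma=\mathrm{id}$, with sign $+1$ --- are exactly what is needed to rule out cancellations and coincidences, and they immediately give both (i) and, via coefficient comparison on the strictly decreasing monomials together with the hypothesis that $h$ is supported on weakly decreasing exponents, the uniqueness direction of (ii). One small notational slip: in your key observation the exponent of $t_j$ in the $\sigma$-term of $V(\lambda+\delta)$ is $(\lambda+\delta)_{\sigma^{-1}(j)}$, which you may call $p_{\sigma^{-1}(j)}$ only after setting $p=\lambda+\delta$; as written, $p$ is an arbitrary strictly decreasing target vector, so the identification should be stated explicitly. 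This does not affect the correctness of the argument.
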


Usually, it is difficult to find $M(f;T_d)$ if we know $f(T_d)$. But
using Lemma \ref{relation between H and M} (ii) it is very easy to check whether a formal power series
is equal to the multiplicity series.
This argument can be used to verify the computational results on multiplicities.

Below we shall summarize some methods to compute the multiplicity series of symmetric functions of special type.

If two symmetric functions $f(T_d)$ and $g(T_d)$ are related by
\[
f(T_d)=g(T_d)\prod_{i=1}^d\frac{1}{1-t_i}=g(T_d)\sum_{n\geq 0}S_{(n)}(T_d),
\]
then $f(T_d)$ is Young derived from $g(T_d)$ because the product of Schur functions $S_{\mu}(T_d)S_{(n)}(T_d)$  can be
decomposed using the Young rule. This is a translation of the notion of Young derived sequences of $S_n$-charactres
introduced by Regev \cite{R3}.
Let $Y$ be the linear operator in ${\mathbb C}[[V_d]]\subset {\mathbb C}[[T_d]]$ which sends the multiplicity series of
the symmetric function $g(T_d)$ to the multiplicity series of its Young derived
$f(T_d)$:
\[
Y(M(g);T_d)=M(f;T_d)=M\left(g(T_d)\prod_{i=1}^d\frac{1}{1-t_i};T_d\right).
\]

\begin{example}\label{multiplicity series of 2x2 matrices}
The multiplicities of the cocharacter sequence of the $2\times 2$ matrix algebra $M_2(K)$
and the Hilbert series of the relatively free algebra $F_d(M_2(K))$ were computed by Procesi \cite{Pr},
Formanek \cite{F1} and Drensky \cite{D1}, see also \cite{D2}. Following \cite{D1, D2} we have
\[
H(F_d(M_2(K)))=\prod_{i=1}^{d}\frac{1}{1-t_i}\left(\sum_{(\lambda_1,\lambda_2,\lambda_3)}S_{(\lambda_1,\lambda_2,\lambda_3)}
-S_{(1,1,1)}-\sum_{n\geq 1}S_{(n)}\right).
\]
By the Young rule we have
\[
\sum_{(\lambda_1,\lambda_2,\lambda_3)}S_{(\lambda_1,\lambda_2,\lambda_3)}=\prod_{i=1}^{d}\frac{1}{1-t_i}\sum_{n\geq 0}S_{(n,n)}.
\]
Also,
\[
\sum_{n\geq 1}S_{(n)}=\sum_{n\geq 0}S_{(n)}-1=\prod_{i=1}^{d}\frac{1}{1-t_i}-1
\]
and hence
\[
H(F_d(M_2(K)))=\prod_{i=1}^{d}\frac{1}{1-t_i}
\left(\prod_{i=1}^{d}\frac{1}{1-t_i}\sum_{n\geq 0}S_{(n,n)}
+1-S_{(1,1,1)}-\prod_{i=1}^{d}\frac{1}{1-t_i}\right)
\]
\[
=\prod_{i=1}^{d}\frac{1}{(1-t_i)^2}\left(\sum_{n\geq 0}S_{(n,n)}-1\right)
+\prod_{i=1}^{d}\frac{1}{1-t_i}(1-S_{(1,1,1)}).
\]
Since
\[
M\left(\sum_{n\geq 0}S_{(n,n)}-1\right)=\sum_{n\geq 0}(t_1t_2)^n-1=\frac{1}{1-t_1t_2}-1
\]
we express the multiplicity series of $M_2(K)$ as
\[
M(M_2(K);T_d)=Y^2\left(\frac{1}{1-t_1t_2}-1\right)+Y(1-t_1t_2t_3).
\]
\end{example}

\begin{example}\label{multiplicities of upper triangular matrices}
By Boumova and Drensky \cite{BD}
the multiplicity series of the algebra $U_k(K)$ of $k\times k$ upper triangular matrices with entries from the base field is
\[
M(U_k;T_d)=\sum_{j=1}^k\sum_{q=0}^{j-1}\sum_{\lambda\vdash q}(-1)^{j-q-1}\binom{k}{j}\binom{j-1}{q}d_{\lambda}Y^j(T_d^{\lambda}),
\]
where $d_{\lambda}$ is the degree of the irreducible $S_n$-character $\chi_{\lambda}$.
\end{example}

The combination of Proposition \ref{Algebra Rpq} and Example \ref{multiplicity series of 2x2 matrices}
immediately gives the explicit form of the Hilbert series of the algebra $R_{p,q}$.

\begin{theorem}\label{Hilbert series of Rpq}
The Hilbert series $H(F_d(R_{p,q}))$ of the algebra $F_d(R_{p,q})$ is
\[
H(F_d(R_{p,q}(K)),T_d)=\sum_{i=1}^{p} \binom{p}{i}(t_1+\cdots+t_d-1)^{i-1}\left( \prod_{s=1}^{d}\frac{1}{1-t_s}\right)^i
\]
\[
+\sum_{j=1}^{q}\binom{q}{j}(t_1+\cdots+t_d-1)^{j-1}\left( \prod_{s=1}^{d}\frac{1}{1-t_s}\right)^j \times
\]
\[
\times \left( \sum_{m\geq 0}S_{(m,m)}\prod_{s=1}^{d}\frac{1}{1-t_s}-S_{(1^3)}-\prod_{s=1}^{d}\frac{1}{1-t_s}+1 \right)^j
\]
\[
+\sum_{i=1}^{p}\sum_{j=1}^{q}\binom{p}{i}\binom{q}{j}(t_1+\cdots+t_d-1)^{i+j-1}\left( \prod_{s=1}^{d}\frac{1}{1-t_s}\right)^{i+j}\times
\]
\[
\times\left( \sum_{m\geq 0}S_{(m,m)}\prod_{s=1}^{d}\frac{1}{1-t_s}-S_{(1^3)}-\prod_{s=1}^{d}\frac{1}{1-t_s}+1 \right)^j.
\]
\end{theorem}

\begin{remark}\label{computing of multiplicities of Rpq}
In the next section we shall use Theorem \ref{Hilbert series of Rpq} for concrete computations with the multiplicities
of the algebra $R_{p,q}$ for small $p$ and $q$.
In the spirit of Example \ref{multiplicities of upper triangular matrices}, we may express the multiplicity series of $R_{p,q}$
applying on the multiplicity series of the symmetric functions
\[
\left(\sum_{m\geq 0}S_{(m,m)}(T_d)\right)^j,\quad j=0,1,\ldots,q,
\]
powers of the operator $Y$ and the operators $Y_{(1)}$ and $Y_{(1^3)}$
in ${\mathbb C}[[V_d]]\subset {\mathbb C}[[T_d]]$ defined by
\[
Y_{(1^k)}(M(g);T_d)=M(g(T_d)S_{(1^k)}(T_d)),\quad g\in {\mathbb C}[[T_d]]^{S_d}.
\]
\end{remark}

The following proposition shows the action of the operator $Y$ in the language of multiplicity series.

\begin{proposition}\label{multiplicity series of Young derives}
{\rm (Drensky and Genov \cite{DG1})}
Let $f(T_d)$ be the Young derived of the symmetric function $g(T_d)$.
Then
\[
Y(M(g);T_d)=M(f;T_d)=M\left(g(T_d)\prod_{i=1}^d\frac{1}{1-t_i};T_d\right)
\]
\[
=\prod_{i=1}^d\frac{1}{1-t_i}\sum(-t_2)^{\varepsilon_2}\ldots(-t_d)^{\varepsilon_d}
M(g;t_1t_2^{\varepsilon_2},t_2^{1-\varepsilon_2}t_3^{\varepsilon_3}\ldots
t_{d-1}^{1-\varepsilon_{d-1}}t_d^{\varepsilon_d},t_d^{1-\varepsilon_d}),
\]
where the summation runs on all $\varepsilon_2,\ldots,\varepsilon_d=0,1$.
\end{proposition}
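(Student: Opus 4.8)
The plan is to prove the identity by computing the right-hand side explicitly and matching it term by term against $M(f;T_d)$, where $f=g(T_d)\prod_{i=1}^d(1-t_i)^{-1}$; recall that by the definition of $Y$ this right-hand side is precisely what we must show equals $M(f;T_d)$. To set up the comparison I would first describe $M(f;T_d)$ combinatorially. Since $\prod_{i=1}^d(1-t_i)^{-1}=\sum_{n\ge 0}S_{(n)}(T_d)$, writing $g=\sum_\mu m_\mu(g)S_\mu$ and expanding every product $S_\mu S_{(n)}$ by the Young rule shows that the multiplicity of $S_\nu$ in $f$ is $\sum_\mu m_\mu(g)$, summed over all partitions $\mu$ for which $\nu/\mu$ is a horizontal strip, i.e. over all $\mu$ interlacing $\nu$ in the sense $\nu_1\ge\mu_1\ge\nu_2\ge\mu_2\ge\cdots\ge\nu_d\ge\mu_d\ge 0$. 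Thus $M(f;T_d)=\sum_\nu(\sum_\mu m_\mu(g))t_1^{\nu_1}\cdots t_d^{\nu_d}$, the outer sum running over partitions $\nu$.

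Next I would expand the right-hand side of the proposition. Writing the displayed arguments as $u_k=t_k^{1-\varepsilon_k}t_{k+1}^{\varepsilon_{k+1}}$ with the conventions $\varepsilon_1=\varepsilon_{d+1}=0$, and substituting $M(g;T_d)=\sum_\mu m_\mu(g)t_1^{\mu_1}\cdots t_d^{\mu_d}$, one sees that each variable $t_k$ occurs in both $u_k$ and $u_{k-1}$, so its exponent in $\prod_j u_j^{\mu_j}$ is $(1-\varepsilon_k)\mu_k+\varepsilon_k\mu_{k-1}$. After multiplying by the sign monomial $(-t_2)^{\varepsilon_2}\cdots(-t_d)^{\varepsilon_d}$, the power of $t_k$ becomes $\mu_k$ when $\varepsilon_k=0$ and $\mu_{k-1}+1$, carrying a sign $-1$, when $\varepsilon_k=1$. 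Consequently the sum over all $\varepsilon\in\{0,1\}^{d-1}$ factorizes: the two choices of $\varepsilon_k$ contribute $t_k^{\mu_k}-t_k^{\mu_{k-1}+1}$ for each $k\ge 2$, while the $k=1$ factor is simply $t_1^{\mu_1}$.

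I would then divide by $\prod_{k=1}^d(1-t_k)$ and read off the resulting geometric series. Because $\mu$ is a partition we have $\mu_{k-1}\ge\mu_k$, so $(t_k^{\mu_k}-t_k^{\mu_{k-1}+1})/(1-t_k)=t_k^{\mu_k}+\cdots+t_k^{\mu_{k-1}}$ is the finite sum over $\nu_k\in[\mu_k,\mu_{k-1}]$, while $t_1^{\mu_1}/(1-t_1)=\sum_{\nu_1\ge\mu_1}t_1^{\nu_1}$. Collecting the coefficient of $t_1^{\nu_1}\cdots t_d^{\nu_d}$ therefore gives $\sum_\mu m_\mu(g)$ over the $\mu$ with $\mu_1\le\nu_1$ and $\mu_k\le\nu_k\le\mu_{k-1}$ for $k\ge 2$, which are exactly the interlacing inequalities produced in the first step; this matches $M(f;T_d)$ and completes the argument.

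The step requiring the most care is the exponent bookkeeping for the shared variables, combined with the observation that the formula is self-correcting. If $\nu$ fails to be a partition, say $\nu_k<\nu_{k+1}$, then the inequalities $\nu_{k+1}\le\mu_k$ and $\mu_k\le\nu_k$ cannot hold simultaneously, so the coefficient vanishes and the right-hand side is automatically supported on partitions, as $M(f;T_d)$ must be. Tracking the boundary indices $k=1$ and $k=d$ through the conventions $\varepsilon_1=\varepsilon_{d+1}=0$, and pinpointing where the partition inequality $\mu_{k-1}\ge\mu_k$ is invoked to ensure the relevant geometric series truncate to the expected finite ones, are the only genuinely delicate bookkeeping points. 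As an independent check one could instead verify that the proposed series satisfies the characterizing identity of Lemma \ref{relation between H and M}(ii).
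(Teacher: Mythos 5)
Your proof is correct and complete. There is, however, no internal proof to compare it against: the paper states this proposition as a quoted result of Drensky and Genov \cite{DG1} and omits the argument entirely, so what you have written is a genuine, self-contained replacement for the external citation. Your route --- expand $M(g;T_d)$ monomial by monomial, observe that after the substitution $t_k\mapsto u_k=t_k^{1-\varepsilon_k}t_{k+1}^{\varepsilon_{k+1}}$ (with the conventions $\varepsilon_1=\varepsilon_{d+1}=0$) and multiplication by $(-t_2)^{\varepsilon_2}\cdots(-t_d)^{\varepsilon_d}$ the exponent of $t_k$ is $\mu_k$ for $\varepsilon_k=0$ and $\mu_{k-1}+1$ with sign $-1$ for $\varepsilon_k=1$, so that the sum over $\varepsilon\in\{0,1\}^{d-1}$ factors as $t_1^{\mu_1}\prod_{k=2}^d\bigl(t_k^{\mu_k}-t_k^{\mu_{k-1}+1}\bigr)$, then divide by $\prod_k(1-t_k)$ and read off the truncated geometric series --- reproduces exactly the interlacing inequalities $\nu_1\ge\mu_1\ge\nu_2\ge\mu_2\ge\cdots\ge\nu_d\ge\mu_d$ that the Young (Pieri) rule assigns to the multiplicity of $S_\nu$ in $g\prod_i(1-t_i)^{-1}$, so the two sides agree coefficientwise; this uses nothing beyond the Pieri rule and formal power series bookkeeping. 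The one point you should make explicit is the well-definedness of the substitution when $\varepsilon_d=1$: there the last argument of $M(g;\cdot)$ is $t_d^{0}=1$, and evaluating a formal power series at $1$ is only legitimate when every monomial of the result receives finitely many contributions. That holds here precisely because $M(g)$ is supported on partitions, so the ``lost'' exponent $\mu_d$ is confined to the finite range $0\le\mu_d\le\mu_{d-1}$; your factorized expression makes this visible, but it merits a sentence. Your closing suggestion of verifying the formula against Lemma \ref{relation between H and M} (ii) is exactly the checking device the paper itself uses for its computed multiplicity series, so it is an apt sanity check, though unnecessary once the direct argument is in place.
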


The decomposition of the product $S_{\mu}S_{(1)}$ is given by the Branching theorem which a
special case of the Young rule. It states that
\[
S_{\mu}(T_d)S_{(1)}(T_d)=\sum S_{\lambda}(T_d),
\]
where the summation is on all partitions $\lambda$ of
$\vert\mu\vert+1$ such that $\lambda_1\geq\mu_1\geq\lambda_2\geq\mu_2\geq\cdots\geq\lambda_d\geq\mu_d$.
In the language of Young diagrams, this means that the diagram of $\lambda$ is obtained by adding a box
to the diagram of $\mu$.
We shall state it in terms of the multiplicity series $M'(f;V_d)$. The result can be easily restated in terms of
the operator $Y_{(1)}$ defined in Remark \ref{computing of multiplicities of Rpq}.

\begin{lemma}\label{applying the branching theorem}
\[
M'(f(T_d)S_{(1)}(T_d);V_d)=v_1M'(f(T_d);V_d)
\]
\[
+\sum_{i=1}^{d-1}\frac{v_{i+1}}{v_i}\left(M'(f(T_d);v_1,\ldots,v_d)-M'(f(T_d);v_1,\ldots,v_{i-1},0,v_{i+1},\ldots,v_d)\right).
\]
\end{lemma}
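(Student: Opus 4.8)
The plan is to prove the identity by invoking the Branching theorem one Schur function at a time and translating the operation ``add a single box to a Young diagram'' into an explicit monomial change in the variables $V_d$. Linearity of all the maps involved will then let me assemble the general formula.

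First I would fix the dictionary between $M'$ and the multiplicities. With the convention $\mu_{d+1}=0$, write for a partition $\mu=(\mu_1,\ldots,\mu_d)$
\[
v^{[\mu]}=\prod_{i=1}^{d}v_i^{\,\mu_i-\mu_{i+1}}=v_1^{\mu_1-\mu_2}\cdots v_{d-1}^{\mu_{d-1}-\mu_d}v_d^{\mu_d},
\]
so that, directly from the definition of $M'$, one has $M'(f;V_d)=\sum_{\mu}m_{\mu}v^{[\mu]}$ for $f=\sum_{\mu}m_{\mu}S_{\mu}$. Since the maps $f\mapsto M'(f;V_d)$ and $f\mapsto fS_{(1)}$ are both linear, and since substitution of values for the $v_i$ is linear, it suffices to compute the contribution of a single $S_{\mu}$ to $M'(fS_{(1)};V_d)$ and then sum against the multiplicities $m_{\mu}$.

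Next I would apply the Branching theorem quoted above, $S_{\mu}S_{(1)}=\sum_{\lambda}S_{\lambda}$, where $\lambda$ ranges over the partitions in at most $d$ parts obtained by adding one box to $\mu$. Such a $\lambda$ has the form $\mu+e_j$ (that is, $\lambda_j=\mu_j+1$ and $\lambda_i=\mu_i$ for $i\neq j$), and it is admissible precisely when $j=1$, or $2\leq j\leq d$ with $\mu_{j-1}>\mu_j$; a box in row $d+1$ is discarded because the corresponding Schur function vanishes in $d$ variables. Comparing the exponents $\lambda_i-\lambda_{i+1}$ with $\mu_i-\mu_{i+1}$ shows that the only changes are $+1$ in the exponent of $v_j$ and $-1$ in the exponent of $v_{j-1}$, the latter being absent when $j=1$; hence $v^{[\lambda]}=v_1\,v^{[\mu]}$ for $j=1$ and $v^{[\lambda]}=(v_j/v_{j-1})\,v^{[\mu]}$ for $2\leq j\leq d$. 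Setting $i=j-1$ this gives
\[
M'(S_{\mu}S_{(1)};V_d)=v_1\,v^{[\mu]}+\sum_{\substack{1\leq i\leq d-1\\ \mu_i>\mu_{i+1}}}\frac{v_{i+1}}{v_i}\,v^{[\mu]}.
\]

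Finally I would sum over $\mu$ with weights $m_{\mu}$ and put the admissibility condition in closed form. The first term yields $v_1M'(f;V_d)$. For the second, the key observation is that $\mu_i>\mu_{i+1}$ is equivalent to the exponent of $v_i$ in $v^{[\mu]}$ being positive, so the substitution $v_i\mapsto 0$ annihilates exactly the monomials with $\mu_i>\mu_{i+1}$ and fixes all the others; therefore
\[
\sum_{\mu:\ \mu_i>\mu_{i+1}}m_{\mu}v^{[\mu]}=M'(f;V_d)-M'(f;v_1,\ldots,v_{i-1},0,v_{i+1},\ldots,v_d).
\]
Substituting this for each $i$ and recombining with the $v_1M'(f;V_d)$ term produces the asserted formula. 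I expect the main obstacle to be purely bookkeeping: correctly treating the two boundary rows (row $1$, which carries no admissibility condition, and the row $d$/row $d+1$ cutoff forced by the vanishing of Schur functions in $d$ variables) and justifying that $v_i\mapsto 0$ extracts precisely the positive-$v_i$-exponent part. Once these points are verified the identity follows by the linearity noted above.
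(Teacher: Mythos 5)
Your proof is correct and follows essentially the same route as the paper's: reduce to a single Schur function $S_{\mu}$ by linearity, apply the Branching theorem to identify the admissible rows, translate adding a box in row $1$ (respectively row $i+1$) into multiplication of the monomial $v^{[\mu]}$ by $v_1$ (respectively $v_{i+1}/v_i$), and extract the terms with $\mu_i>\mu_{i+1}$ via the substitution $v_i\mapsto 0$, exactly as in the paper's case distinction on whether $p_i=\mu_i-\mu_{i+1}$ is positive or zero. The only cosmetic difference is that you sum over $\mu$ with multiplicities at the end rather than stating the identity for each $S_{\mu}$, which changes nothing of substance.
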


\begin{proof}
It is sufficient to prove the lemma for $f(T_d)=S_{\mu}(T_d)$. Rewriting $\mu$ as
\[
\mu=(p_1+\cdots+p_d,p_2+\cdots+p_d,\ldots,p_d),
\]
where $p_i=\mu_i-\mu_{i+1}$, $i=1,\ldots,d-1$, $p_d=\mu_d$,
we obtain
\[
M'(S_{\mu}(T_d),V_d)=v_1^{p_1}\cdots v_d^{p_d}.
\]
By the Branching theorem $S_{\mu}(T_d)S_{(1)}(T_d)$ is a sum of $S_{\lambda}(T_d)$, where
\[
\lambda=(\mu_1,\ldots,\mu_i,\mu_{i+1}+1,\mu_{i+2},\ldots,\mu_d)
\]
and the sum runs on those $i$ with $\mu_i>\mu_{i+1}$, adding the case $i=0$. (We may add a box to
the first row of the diagram of $\mu$ and to the $(i+1)$-st row, if the boxes on the $i$-th row of
the diagram of $\mu$ are more than the boxes of the $(i+1)$-st row.)
In the language of multiplicity series this means that
$M'(S_{\mu}(T_d)S_{(1)}(T_d),V_d)$ is a sum of $v_1^{p_1+1}v_2^{p_2}\cdots v_d^{p_d}$ (adding a box to the first row of
the diagram of $\mu$) and of those
\[
v_1^{p_1}\cdots v_{i-1}^{p_{i-1}}v_i^{p_i-1}v_{i+1}^{p_{i+1}+1}v_{i+2}^{p_{i+2}}\cdots v_d^{p_d}
=\frac{v_{i+1}}{v_i}v_1^{p_1}\cdots v_d^{p_d}
\]
with $p_i>0$ (to add a box to the $(i+1)$-st row we need $\mu_i>\mu_{i+1}$). This completes the proof because
\[
M'(S_{\mu};V_d)-M'(S_{\mu};v_1,\ldots,v_{i-1},0,v_{i+1},\ldots,v_d)=\begin{cases}
M'(S_{\mu};V_d),\text{ if }p_i>0,\\
0,\text{ if } p_i=0.\\
\end{cases}
\]
\end{proof}

The product $S_{\mu}(T_d)S_{(1^3)}(T_d)$ can be decomposed using the Young rule. The result is the sum of $S_{\lambda}(T_d)$
such that $\vert\lambda\vert=\vert\mu\vert+3$ and the diagram of $\lambda$ is obtained from the diagram of $\mu$ by adding three
boxes to pairwise different rows.
We can establish an analogue of Lemma \ref{applying the branching theorem} which relates the multiplicity series
of the symmetric functions $g(T_d)$ and $g(T_d)S_{(1^3)}(T_d)$ (which gives a formula for the action of the operator $Y_{(1^3)}$)
but the expression is more complicated. For our computations in the next section we do not need the general form
of the action of $Y_{(1^3)}$. It is defined for $d\geq 3$ and we apply it for $d=3$ only. Since
\[
S_{(\mu_1,\mu_2,\mu_3)}(T_3)S_{(1^3)}(T_3)=S_{(\mu_1+1,\mu_2+1,\mu_3+1)}(T_3)
\]
we obtain immediately that
\[
M'(g(T_3)S_{(1^3)}(T_3);V_3)=v_3M'(g(T_3);V_3).
\]

Finally, to compute $M(R_{p,q})$ we need the multiplicity series of the symmetric function
$\left(\sum_{n\geq 0}S_{(n,n)}\right)^j$. We give the results for small $j$.

\begin{lemma}\label{M of powers of f}
Let
\[
f(T_d)=\sum_{n\geq 0}S_{(n,n)}(T_d).
\]
Then
\[
M'(f;V_d)=\frac{1}{1-v_2},
\]
\[
M'(f^2)=\frac{1}{(1-v_1v_3)(1-v_2)^2(1-v_4)}.
\]
For $d=4$
\[
M'(f^3;V_4)=\frac{(1-v_1v_2v_3)(1-v_1^2v_3^2v_4)}{(1+v_3)(1-v_1^2v_4)(1-v_1v_3)^3(1-v_3)(1-v_2)^3(1-v_4)^3}.
\]
\end{lemma}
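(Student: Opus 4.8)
The plan is to compute the Schur expansion of each power $f^k$ and then apply the bijection $M'$, which is linear and sends a Schur function to a single monomial, $M'(S_\lambda;V_d)=v_1^{\lambda_1-\lambda_2}v_2^{\lambda_2-\lambda_3}\cdots v_d^{\lambda_d}$, as recorded in the proof of Lemma~\ref{applying the branching theorem}. For $f$ itself this is immediate: in $S_{(n,n)}$ the only nonzero difference of consecutive parts is $\lambda_2-\lambda_3=n$, so $M'(S_{(n,n)};V_d)=v_2^n$ and $M'(f;V_d)=\sum_{n\ge0}v_2^n=1/(1-v_2)$.

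For $M'(f^2)$ I would first determine the Littlewood--Richardson decomposition of a product of two two--row rectangles $S_{(a,a)}S_{(b,b)}$. Filling the skew shape $\lambda/(a,a)$ with content $(b,b)$ uses only the letters $1$ and $2$, so every column carries at most two boxes; this forces $\lambda$ to have at most four parts and $\lambda_3\le a$. Column strictness together with the lattice (ballot) condition then forces the whole filling: rows $1$ and $4$ are constant, row $2$ is all $2$'s, and row $3$ is a block of $1$'s followed by a block of $2$'s of determined lengths. Hence $c^{\lambda}_{(a,a),(b,b)}\in\{0,1\}$, and it equals $1$ exactly when $\lambda_1+\lambda_4=\lambda_2+\lambda_3$ and $\lambda_3\le\min(a,b)$. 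Summing over the admissible pairs $(a,b)$, namely those with $a+b=\lambda_2+\lambda_3$ and $\max(a,b)\le\lambda_2$, gives the multiplicity $\lambda_2-\lambda_3+1$ of $S_\lambda$ in $f^2$, subject to $\lambda_1+\lambda_4=\lambda_2+\lambda_3$. Writing $p_i=\lambda_i-\lambda_{i+1}$, this constraint reads $p_1=p_3$ and the multiplicity is $p_2+1$, so $M'(f^2)=\big(\sum_{p_1\ge0}(v_1v_3)^{p_1}\big)\big(\sum_{p_2\ge0}(p_2+1)v_2^{p_2}\big)\big(\sum_{p_4\ge0}v_4^{p_4}\big)$, which is exactly $1/((1-v_1v_3)(1-v_2)^2(1-v_4))$.

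For $M'(f^3;V_4)$ I would iterate, writing $f^3=f^2\cdot f$ and multiplying the now-known expansion of $f^2$ by $\sum_{c\ge0}S_{(c,c)}$. The essential new feature, and the main obstacle, is that this product produces partitions with up to six parts, all of which must be discarded because $S_\lambda(T_4)=0$ once $\lambda$ has more than four parts; only the surviving four--part shapes contribute to $M'(f^3;V_4)$. Thus I must compute the two--row coefficients $c^{\lambda}_{\mu,(c,c)}$ for four--part $\mu$ and $\lambda$ by inserting an intermediate shape $\nu$, with the $1$'s forming a horizontal strip $\nu/\mu$ and the $2$'s a horizontal strip $\lambda/\nu$ obeying the lattice condition, and then sum $\sum_\mu(\mu_2-\mu_3+1)\,c^\lambda_{\mu,(c,c)}$ over the admissible $\mu$. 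Unlike the $f^2$ case, the resulting multiplicity of $S_\lambda$ is a genuine quasi-polynomial in the parts of $\lambda$, and converting $\sum_\lambda m_\lambda\,v_1^{p_1}v_2^{p_2}v_3^{p_3}v_4^{p_4}$ into closed form requires the partition--analysis bookkeeping of Elliott--MacMahon type; the periodicity this introduces is precisely what produces the factor $1+v_3$ and the higher denominator powers in the stated formula. I expect the four--part truncation together with the quasi-polynomial summation to be the genuinely laborious step.

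Finally, whatever candidate this yields, I would verify it independently by Lemma~\ref{relation between H and M}(ii): re-expressing the $v_i$ through $v_i=t_1\cdots t_i$, one checks the identity $f^3(T_4)\prod_{i<j}(t_i-t_j)=\sum_{\sigma\in S_4}\mathrm{sign}(\sigma)\,t_{\sigma(1)}^{3}t_{\sigma(2)}^{2}t_{\sigma(3)}\,h(t_{\sigma(1)},\dots,t_{\sigma(4)})$ with $h$ the proposed series. As the text observes, this direction is easy to test and serves as a safeguard on the delicate $f^3$ computation.
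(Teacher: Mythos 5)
Your proposal is correct and essentially coincides with the paper's own proof: the paper computes $M'(f^2)$ by the very same Littlewood--Richardson count (unique fillings of $\lambda/(a,a)$ by content $(b,b)$, giving multiplicity $\lambda_2-\lambda_3+1$ on the cone $\lambda_1-\lambda_2=\lambda_3-\lambda_4$), and it likewise treats the $f^3$ formula as a computationally found ansatz (via MacMahon/Xin-style partition analysis) whose actual proof is the direct verification through Lemma~\ref{relation between H and M}~(ii) --- exactly your final safeguard step. The only ingredient the paper makes explicit that you leave implicit is the closed rational form $f(T_4)=(1-t_1t_2t_3t_4)/\prod_{1\le i<j\le 4}(1-t_it_j)$, which is what makes that verification practicable.
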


\begin{proof}
The expression for $M'(f;V_d)$ follows immediately by the definition of the operator $M'$:
\[
M'(f;V_d)=\sum_{n\geq 0}v_2^n=\frac{1}{1-v_2}.
\]
By the Littlewood-Richardson rule, if $\lambda$ and $\mu$ are partitions in $a$ and $b$ parts, respectively, then
the product of the Schur functions $S_{\lambda}$ and $S_{\mu}$
is a linear combination of Schur functions $S_{\nu}$ where $\nu$ is a partition in $\leq a+b$ parts.
Hence, in order to compute $M'(f^2)$ it is sufficient to assume that $d=4$.
Using the expression of Schur functions as quotients of Vandermonde type determinants
we obtain
\[
f(T_4)=\frac{1-t_1t_2t_3t_4}{(1-t_1t_2)(1-t_1t_3)(1-t_1t_4)(1-t_2t_3)(1-t_2t_4)(1-t_3t_4)}.
\]
This expression can be obtained also from the well known equality
\[
\prod_{i<j}\frac{1}{1-t_it_j}=\sum S_{\nu},
\]
where $\nu=(\nu_1^2,\ldots,\nu_k^2)$ (each row of the diagram of $\nu$ appears even number of times).
Then
\[
\prod_{1\leq i<j\leq 4}\frac{1}{1-t_it_j}=\sum_{\nu_1\geq\nu_2\geq 0}S_{(\nu_1,\nu_1,\nu_2,\nu_2)}(T_4)
\]
and the product
\[
t_1t_2t_3t_4\prod_{1\leq i<j\leq 4}\frac{1}{1-t_it_j}=\sum_{\nu_1\geq\nu_2\geq 0}S_{(\nu_1+1,\nu_1+1,\nu_2+1,\nu_2+1)}(T_4)
\]
contains the summands with $\nu_2>0$. Now the equalities in the statement of the lemma can be verified applying
Lemma \ref{relation between H and M} (ii).

We shall calculate once again $M(f^2)$ using the Littlewood-Richardson rule.
Recall that a lattice permutation is a sequence of integers such that in every initial part of the sequence any number $i$ occurs
at least as often as the number $i+1$. We fix a partition $\lambda$ of $2k$. We consider all possible pairs $(m,n)$
such that the diagram of $\lambda$ contains the diagram of the partition $(m,m)$.
We have to count semistandard $\lambda-(m,m)$ tableaux filled in with $n$ numbers 1 and $n$ numbers 2
in such a way that reading its entries from right to left and from top to bottom we
have a lattice permutation.
\bigskip
\begin{center}
{\normalsize \Yvcentermath1
$\displaystyle \sum_{m,n\geq 0}\young(XX\cdots XX,XX\cdots XX)\times \young(11\cdots11,22\cdots 22)$}
\end{center}
\bigskip
\begin{center}
{\normalsize \Yvcentermath1
$\displaystyle =\sum\young(X\ldots XX\ldots XX1\ldots11\ldots1,X\ldots XX\ldots XX2\ldots2,1\ldots12\ldots2,2\ldots2)$}
\end{center}
\bigskip
\noindent Combining the semistandard and lattice permutation properties of the tableau, we see that the only possibilities for the first two rows
are as given in the figure. Since for the first two rows we have used $\lambda_1-m$ numbers 1 and $\lambda_2-m$ numbers 2,
we have left $n-(\lambda_1-m)$ numbers 1 and $n-(\lambda_2-m)$ numbers 2 to fill in the third and the fourth rows.
It is easy to see that the only possibility is as given in the figure which implies the condition $\lambda_1-\lambda_2=\lambda_3-\lambda_4$.
The number of all possibilities is $\lambda_2-\lambda_3+1$ (we can fill in with $2$ part of the boxes of the second row
under which there are no boxes of the third row). Therefore
\[
M'(f^2;V_4)=\sum v_1^{\lambda_1-\lambda_2}(\lambda_2-\lambda_3+1)v_2^{\lambda_2-\lambda_3}v_3^{\lambda_3-\lambda_4}v_4^{\lambda_4},
\]
where the summation runs over all $\lambda=(\lambda_1,\lambda_2,\lambda_3,\lambda_4)$ with $\lambda_1-\lambda_2=\lambda_3-\lambda_4$. Hence,
\[
M'(f^2;V_4)=\sum_{\lambda_1\geq \lambda_2\geq 0} (v_1v_3)^{\lambda_1-\lambda_2}
\sum_{\lambda_2\geq\lambda_3\geq 0}(\lambda_2-\lambda_3+1)v_2^{\lambda_2-\lambda_3}
\sum_{\lambda_4\geq 0}v_4^{\lambda_4}
\]
\[
=\frac{1}{(1-v_1v_3)(1-v_4)}\sum_{k\geq 0}(k+1)v_2^k
=\frac{1}{(1-v_1v_3)(1-v_2)^2(1-v_4)}.
\]
\end{proof}

\begin{remark}\label{how to find the multiplicities}
The formula for $M'(f^3;V_4)$ in Lemma \ref{M of powers of f} is given by an ``ansatz'', and then proved by direct verification.
To find it, we have used a method which is a version of the Xin algorithm \cite{X} for the $\Omega$-Calculus of MacMahon \cite{Mc}
adapted to work with rational symmetric functions. This method has been applied to solve problems in the theory of PI-algebras and
classical invariant theory, see \cite{BBDGK}.
\end{remark}

\section{Results}

Using Theorem \ref{Hilbert series of Rpq}, we have calculated the multiplicity series
of the algebra $R_{p,q}$ for small values of $p$ and $q$.
In most of the cases we give the results for $d=2$ or $d=3$ only. Then we have expanded $M'(R_{p,q};V_d)$
to obtain the explicit form of the multiplicities $m_{\lambda}(R_{p,q})$ and their asymptotics.
Since the formulas for the multiplicities are quite complicated, in the paper we present the asymptotics only.
The multiplicity series have been computed as suggested in Remark \ref{computing of multiplicities of Rpq},
using standard functions of {\it Maple}.
But, once we have their explicit form, we can easily prove that this is the true expression for $M'(R_{p,q};V_d)$
applying Lemma \ref{relation between H and M} (ii). We state the results of our computations as a series of theorems.
In view of the above comments, we omit the proofs. In order to simplify the notation, we make the convention that
$\lambda=(\lambda_1,\ldots,\lambda_d)$ is a partition of
$n$ and define $n_i=\lambda_i-\lambda_{i+1}$, for $i=1,\ldots,d-1$ and $n_d=\lambda_d$.
In particular $n=n_1+2n_2+\cdots+dn_d$. Although most of the results are quite technical, we believe that they can be used
as an ``experimental data'' for future investigations.

\begin{theorem}
Let $p=1$ and $q=1$.

{\rm (i)} The multiplicity series of $R_{1,1}$ is
\[
M'(R_{1,1};V_d)=\sum_{i=1}^{11}A_i,
\]
where
\[
A_1=\frac{2}{1-v_1};\quad A_2=-\frac{2}{(1-v_1)^2(1-v_2)};
\]
\[
A_3=\frac{v_1v_4v_2-3v_3-v_1-v_4+v_1v_3-3v_2+3v_2v_3+v_2^2-v_2^2v_3+3v_1v_2}{(1-v_1)^3(1-v_2)^3(1-v_3)};
\]
\[
A_4=\frac{v_1+2v_2-v_1v_2+v_3}{(1-v_1)^2(1-v_2)};\quad A_5=\frac{1-v_1v_2}{(1-v_1)^3(1-v_2)^3(1-v_3)};
\]
\[
A_6=\frac{1}{(1-v_1)^2(1-v_2)^2(1-v_3)^2(1-v_4)};\quad A_7=-\frac{v_3+v_4}{1-v_1};
\]
\[
A_8=\frac{(v_1v_2-1)(v_2v_3-1)(v_3v_4-1)}{(1-v_1)^3(1-v_2)^4(1-v_3)^4(1-v_4)^3(1-v_5)};
\]
\[
A_9=\frac{2v_4+v_5+v_3}{(1-v_1)^2(1-v_2)};
\]
\[
A_{10}=\frac{P_{10}}{(1-v_1)^3(1-v_2)^4(1-v_3)^4(1-v_4)^3(1-v_5)},
\]
where
\[
P_{10}=v_1+3v_2-4v_1v_4v_2+4v_3-v_2^2+4v_4-v_1v_3-6v_2v_3+v_2^2v_3-3v_1v_2
\]
\[
-6v_3v_4+3v_5+9v_1v_2v_4v_3-v_3^2+v_6-v_1v_4v_3+v_2^2v_4v_3+6v_2v_4v_3^2-v_2^2v_4v_3^2
\]
\[
+v_1v_4v_3^2+v_1v_2v_3^2-3v_2v_4v_3+3v_1v_2^2v_3-v_1v_2v_3-v_6v_1v_2-v_6v_4v_3
\]
\[
-3v_2v_3v_5+v_2v_3^2v_5-3v_1v_2v_5+v_1v_2^2v_4-v_4^2-v_2v_4-3v_1v_2v_4v_3^2
\]
\[
-3v_1v_2^2v_4v_3^2-v_1v_2^2v_4^2v_3+v_6v_1v_2^2v_3+v_1v_2v_3v_5+3v_1v_2^2v_3v_5-v_1v_2^2v_3^2v_5
\]
\[
+3v_1v_2v_4v_5+3v_2v_4v_3v_5+v_1v_2v_4^2-v_6v_1v_2^2v_4v_3^2-3v_1v_2^2v_4v_3v_5
\]
\[
+v_1v_2^2v_4^2v_3v_5+v_6v_1v_2v_4v_3-v_3v_5+v_2v_4^2v_3-v_6v_2v_3-v_1v_2v_4^2v_5
\]
\[
-v_2v_4^2v_3v_5+v_6v_2v_4v_3^2-3v_4v_5+v_4^2v_5;
\]
\[
A_{11}=\frac{P_{11}}{(1-v_1)^2(1-v_2)},
\]
where
\[
P_{11}=-2v_5-v_1v_5-2v_2v_5+v_1v_2v_5-v_4-2v_1v_4-4v_2v_4+2v_1v_4v_2-v_1v_3
\]
\[
-2v_2v_3+v_1v_2v_3-v_3^2-v_6-v_3v_5-2v_3v_4.
\]

{\rm (ii)} The nonzero multiplicities $m_{\lambda}(R_{1,1})$ are:
\[
m_{\lambda}(R_{1,1})=\frac{3n_1n_2n_3n_4}{4}\left(\frac{n_1n_3n_4+n_1n_2n_4+n_2n_3n_4+n_1n_2n_3}{2}\right.
\]
\[
\left.+\frac{n_2^2n_4+n_2n_3^2+n_1n_3^2+n_2^2n_3}{3}\right)+{\mathcal O}\left(n^6\right),
\]
if $\lambda=(\lambda_1,\lambda_2,\lambda_3,\lambda_4)$;
\[
m_{\lambda}(R_{1,1})={n_1n_2n_3n_4}\left(\frac{n_1n_3n_4+n_1n_2n_4+n_2n_3n_4+n_1n_2n_3}{2}\right.
\]
\[
\left.+\frac{n_2^2n_4+n_2n_3^2+n_1n_3^2+n_2^2n_3}{3}\right)+{\mathcal O}\left(n^6\right),
\]
if $\lambda=(\lambda_1,\lambda_2,\lambda_3,\lambda_4,\lambda_5)$, where $\lambda_5>0$ and
\[
m_{\lambda}(R_{1,1})=\frac{n_1n_2n_3n_4}{4}\left(\frac{n_1n_3n_4+n_1n_2n_4+n_2n_3n_4+n_1n_2n_3}{2}\right.
\]
\[
\left.+\frac{n_2^2n_4+n_2n_3^2+n_1n_3^2+n_2^2n_3}{3}\right)+{\mathcal O}\left(n^6\right),
\]
if $\lambda=(\lambda_1,\lambda_2,\lambda_3,\lambda_4,\lambda_5,1)$.
\end{theorem}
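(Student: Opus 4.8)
The plan is to feed the Hilbert series of $F_d(R_{1,1})$ into the operator machinery of the previous section. Specializing Corollary~\ref{Hilbert series of Rpq} to $p=q=1$ and writing $P=\prod_{s=1}^d(1-t_s)^{-1}$, $L=t_1+\cdots+t_d-1=S_{(1)}-1$ and $f=\sum_m S_{(m,m)}$, the Hilbert series collapses to
\[
H(F_d(R_{1,1}))=P+H_2+L\,P\,H_2,\qquad H_2=H(F_d(M_2(K)))=P\bigl(Pf-S_{(1^3)}-P+1\bigr),
\]
which is exactly the shape produced by Corollary~\ref{Corollary of theorem by Formanek}(i) for the product $T(M_1(K))T(M_2(K))$. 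Since $M'$ is linear, I would split
\[
M'(R_{1,1};V_d)=M'(P)+M'(H_2)+M'(LPH_2)
\]
and treat the three summands separately.

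The first is immediate, $M'(P)=Y(M'(1))=(1-v_1)^{-1}$, where $Y$ is the Young-derivation operator whose explicit action is given by Proposition~\ref{multiplicity series of Young derives}. The second is supplied by Example~\ref{multiplicity series of 2x2 matrices}, which in the variables $V_d$ reads $M'(H_2)=Y^2\bigl((1-v_2)^{-1}-1\bigr)+Y(1-v_3)$; here I use the base values $M'(f)=(1-v_2)^{-1}$ from Lemma~\ref{M of powers of f}, $M'(1)=1$ and $M'(S_{(1^3)})=v_3$. The third summand is the only genuinely new computation: as $PH_2$ is the Young derived of $H_2$ and multiplication by $L$ is $S_{(1)}-1$,
\[
M'(LPH_2)=(Y_{(1)}-\mathrm{id})\,Y\bigl(M'(H_2)\bigr)=(Y_{(1)}-\mathrm{id})\Bigl[Y^3\bigl((1-v_2)^{-1}-1\bigr)+Y^2(1-v_3)\Bigr],
\]
where $Y_{(1)}$ is the branching operator made explicit in Lemma~\ref{applying the branching theorem}. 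Note that $S_{(1^3)}$ enters only linearly, so the general-$d$ action of $Y_{(1^3)}$ is never needed. Carrying out at most three applications of $Y$ and one of $Y_{(1)}$ on the base series $(1-v_2)^{-1}$, $1$ and $v_3$, and regrouping the resulting rational functions by common denominators, should reproduce the eleven summands $A_1,\dots,A_{11}$. As partial checks, the two copies of $(1-v_1)^{-1}$ (one from $M'(P)$, one from $Y(1)$ inside $Y(1-v_3)$) combine into $A_1$, the term $-Y(v_3)$ equals $A_7=-(v_3+v_4)/(1-v_1)$, and the two copies of $-Y^2(1)=-(1-v_1)^{-2}(1-v_2)^{-1}$ combine into $A_2$.

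To make part~(i) independent of this bookkeeping and of the computer algebra used to iterate $Y$, I would verify the closed form through the criterion of Lemma~\ref{relation between H and M}(ii). Translating $\sum_{i=1}^{11}A_i$ back into the monomial series $h(T_d)=M(R_{1,1};T_d)$, it suffices to confirm the alternating identity
\[
H(F_d(R_{1,1}))\prod_{i<j}(t_i-t_j)=\sum_{\sigma\in S_d}\mathrm{sign}(\sigma)\,t_{\sigma(1)}^{d-1}\cdots t_{\sigma(d-1)}\,h(t_{\sigma(1)},\dots,t_{\sigma(d)})
\]
against the known Hilbert series; this reduces the claim to a finite symbolic check and simultaneously confirms stabilization in $d$, consistent with the vanishing of $m_\lambda$ for partitions in more than $\dim R_{1,1}=7$ parts.

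For part~(ii) I would expand each $A_i$ as a power series in $v_1,\dots,v_d$ by partial fractions and geometric series and read off the coefficient of $v_1^{n_1}\cdots v_d^{n_d}$ as $m_\lambda$, where $n_i=\lambda_i-\lambda_{i+1}$. The pole orders of the $A_i$ along the hyperplanes $v_i=1$ govern the polynomial growth, while numerator factors such as $1-v_iv_j$ and $1-v_1v_2v_3$ lower the effective degree; the degree-$7$ leading term, homogeneous in $n_1,n_2,n_3,n_4$, is assembled from the surviving top-order contributions, principally those of $A_8$ and $A_{10}$. The three stated regimes---partitions in $4$ parts, in $5$ parts, and in $6$ parts with $\lambda_6=1$---correspond to whether $n_5$ and $n_6$ vanish and account for the differing rational prefactors $\tfrac34$, $1$ and $\tfrac14$ of the same degree-$7$ polynomial. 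The main obstacle throughout is computational rather than conceptual: $Y$ is a sum over $2^{d-1}$ sign patterns, so iterating it three times and then branching yields bulky rational expressions whose simplification---and the cancellations needed to isolate the precise leading asymptotics from several $A_i$ at once---are delicate and are best organized by the $\Omega$-calculus/partition-analysis procedure of Remark~\ref{how to find the multiplicities}.
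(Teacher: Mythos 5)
Your proposal is correct and coincides with the paper's own (only sketched) method: the authors state at the start of Section 3 that the multiplicity series was obtained from Corollary \ref{Hilbert series of Rpq} by applying the operators $Y$ and $Y_{(1)}$ of Remark \ref{computing of multiplicities of Rpq} to the base series supplied by Lemma \ref{M of powers of f} and Example \ref{multiplicity series of 2x2 matrices} (via computer algebra in the spirit of Remark \ref{how to find the multiplicities}), then certified the closed form by Lemma \ref{relation between H and M}(ii) and expanded it to extract the asymptotics, after which the detailed proof is omitted. Your decomposition $H(F_d(R_{1,1}))=P+H_2+LPH_2$, the operator identity $M'(LPH_2)=(Y_{(1)}-\mathrm{id})\,Y(M'(H_2))$, the observation that for $q=1$ the general action of $Y_{(1^3)}$ is never needed, and the spot checks for $A_1$, $A_2$, $A_7$ are all accurate, so your write-up is essentially a more explicit rendering of the very computation the paper performs.
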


\begin{theorem}
Let $p=0$ and $q=2$.

{\rm (i)} The multiplicity series in two variables of $R_{0,2}$ is
\[
M'(R_{0,2};v_1,v_2)=\frac{P}{(1-v_1)^4(1-v_2)^7},
\]
where
\[
P=1-89v_1^2v_2^3-3v_1+82v_1^2v_2^4-42v_1^2v_2^5-7v_1v_2^6+13v_1^2v_2^6+7v_2v_1^3-21v_2^2v_1^3
\]
\[
+19v_1v_2-52v_1v_2^2+78v_1v_2^3-70v_1v_2^4+30v_1v_2^5-20v_1^2v_2+57v_1^2v_2^2+v_2^7v_1
\]
\[
+35v_2^3v_1^3-35v_2^4v_1^3+21v_2^5v_1^3-7v_2^6v_1^3-2v_2^7v_1^2+v_2^7v_1^3-6v_2+3v_1^2+16v_2^2-v_1^3
\]
\[
-23v_2^3+21v_2^4-8v_2^5+v_2^6.
\]

{\rm (ii)} The multiplicities $m_{\lambda}(R_{0,2})$ for $\lambda=(\lambda_1,\lambda_2)=(n_1+n_2,n_2)$ are
\[
m_{\lambda}=\frac{1}{6!}n_1n_2^4(5n_1^2+6n_1n_2+2n_2^2)+{\mathcal O}\left(n^6\right).
\]
\end{theorem}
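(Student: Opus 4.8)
The plan is to follow the scheme of Remark~\ref{computing of multiplicities of Rpq}: specialize the Hilbert series of Corollary~\ref{Hilbert series of Rpq} to $p=0$, $q=2$, $d=2$, convert it to a multiplicity series by the operator calculus of the previous section, and then extract the asymptotics of the coefficients. Setting $p=0$ leaves only the middle sum of Corollary~\ref{Hilbert series of Rpq}, and since $S_{(1^3)}(T_2)=0$ (a three-row diagram admits no semistandard filling with two symbols) the building block collapses to $B=fP_2-P_2+1$, where $P_2=\frac{1}{(1-t_1)(1-t_2)}$ and $f=\sum_{m\ge 0}S_{(m,m)}$, giving
\[
H(F_2(R_{0,2}))=2P_2B+(t_1+t_2-1)P_2^2B^2.
\]
Expanding $B^2$ and writing $t_1+t_2-1=S_{(1)}-1$, this is a finite $\mathbb{C}$-linear combination of monomials $S_{(1)}^aP_2^bf^c$ with $a\in\{0,1\}$, $b\le 4$ and $c\in\{0,1,2\}$.

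Next I would apply $M'$ term by term. By linearity this requires only the action of $M'$ on each monomial, and the operator dictionary supplies it: multiplication by $P_2$ is $Y$ (Proposition~\ref{multiplicity series of Young derives}) and multiplication by $S_{(1)}$ is $Y_{(1)}$ (Lemma~\ref{applying the branching theorem}), so that $M'(S_{(1)}^aP_2^bf^c;V_2)=Y_{(1)}^aY^b\big(M'(f^c;V_2)\big)$. The seeds are the two-variable specializations $M'(1;V_2)=1$, $M'(f;V_2)=\frac{1}{1-v_2}$ and $M'(f^2;V_2)=\frac{1}{(1-v_2)^2}$; the last follows from the Littlewood--Richardson description in the proof of Lemma~\ref{M of powers of f} by keeping only partitions in at most two rows, which forces $\lambda_1=\lambda_2$ with multiplicity $\lambda_2+1$. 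Summing the finitely many contributions and clearing the common denominator $(1-v_1)^4(1-v_2)^7$ then produces the asserted expression $\frac{P}{(1-v_1)^4(1-v_2)^7}$.

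The derivation above is heavy, and this is the main obstacle: the operator $Y$ acts by $\frac{1}{(1-t_1)(1-t_2)}\sum_{\varepsilon}(-t_2)^{\varepsilon}M(g;t_1t_2^{\varepsilon},t_2^{1-\varepsilon})$, so each monomial needs several compositions of $Y$ followed by one of $Y_{(1)}$, with numerators that balloon before the final cancellation. It is therefore best carried out by computer, and the rigorous content lies in an independent check via Lemma~\ref{relation between H and M}(ii). Writing $h(t_1,t_2)$ for the candidate $M'(R_{0,2};V_2)$ re-expressed through $v_1=t_1$, $v_2=t_1t_2$, one verifies the single polynomial identity
\[
H(F_2(R_{0,2}))\,(t_1-t_2)=t_1\,h(t_1,t_2)-t_2\,h(t_2,t_1),
\]
which is the $d=2$ instance of Lemma~\ref{relation between H and M}(ii) and certifies part~(i) unconditionally, turning the long symbolic computation into one finite rational identity in $t_1,t_2$.

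For part~(ii) I would read off $m_\lambda=[v_1^{n_1}v_2^{n_2}]\frac{P}{(1-v_1)^4(1-v_2)^7}$ by Taylor expanding $P=\sum_{i,j}c_{ij}(1-v_1)^i(1-v_2)^j$ at $(v_1,v_2)=(1,1)$, so that
\[
m_\lambda=\sum_{i,j}c_{ij}\binom{n_1+3-i}{3-i}\binom{n_2+6-j}{6-j}.
\]
Since $\binom{n+k-1}{k-1}\sim n^{k-1}/(k-1)!$, the degree of the leading polynomial is governed by the smallest $i+j$ with $c_{ij}\ne 0$. The decisive point is that $P$ vanishes to second order at $(1,1)$, that is $c_{00}=c_{10}=c_{01}=0$, which eliminates the a priori degree-$9$ and degree-$8$ contributions; the second-order coefficients $c_{20}=2$, $c_{11}=2$, $c_{02}=1$ then combine through the binomial asymptotics into $\frac{1}{6!}(2n_1n_2^6+6n_1^2n_2^5+5n_1^3n_2^4)=\frac{1}{6!}n_1n_2^4(5n_1^2+6n_1n_2+2n_2^2)$, with all remaining terms of total degree at most $6$ and hence absorbed into ${\mathcal O}(n^6)$.
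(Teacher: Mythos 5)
Your proposal is correct and follows essentially the same route as the paper: the authors state that they computed $M'(R_{p,q};V_d)$ exactly as in Remark~\ref{computing of multiplicities of Rpq} (with Maple doing the heavy operator manipulations) and then certified the resulting closed form via Lemma~\ref{relation between H and M}(ii), which is precisely your scheme, including the observation that only $Y$ and $Y_{(1)}$ are needed since $S_{(1^3)}(T_2)=0$. Your extraction of the asymptotics in part (ii) from the expansion of $P$ at $(v_1,v_2)=(1,1)$ is the standard reading-off step the paper leaves implicit, and your coefficients $c_{00}=c_{10}=c_{01}=0$, $c_{20}=c_{11}=2$, $c_{02}=1$ check out and indeed yield $\frac{1}{6!}n_1n_2^4(5n_1^2+6n_1n_2+2n_2^2)+\mathcal{O}(n^6)$.
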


We have computed also $M'(R_{0,2};V_3)$, the multiplicities $m_{\lambda}(R_{0,2})$ when $\lambda$ is a partition in three parts
but, besides the expression for $M'(R_{0,2};V_3)$, we give the asymptotics of the multiplicities for $n_3\geq n_1$ only.

\begin{theorem}
Let $p=0$ and $q=2$.

{\rm (i)} The multiplicity series in three variables of $R_{0,2}$ is
\[
M'(R_{0,2};v_1,v_2,v_3)=\sum_{i=1}^{23}A_i,
\]
where
\[
A_1=\frac{2}{(1-v_1)^2(1-v_2)^2(1-v_3)^2};
\]
\[
A_2=-\frac{2v_3}{1-v_1};
\]
\[
A_3=-\frac{3}{(1-v_1)^2(1-v_2)};
\]
\[
A_4=\frac{2}{1-v_1};
\]
\[
A_5=\frac{P_5}{(1-v_3v_1)(1-v_1)^4(1-v_2)^7(1-v_3)^{10}},
\]
where
\[
P_5=141v_1^2v_2^3v_3^3-124v_1^2v_2^2v_3^2-8v_1^3v_2v_3^2-4v_1^3v_2v_3^3-206v_1v_2^2v_3^3+13v_1^3v_2^2v_3^2
\]
\[
+52v_1v_2^2v_3^2+62v_1v_2^2v_3-10v_1v_2^3v_3+38v_1v_2v_3^2+8v_1^2v_2v_3-v_1^3v_2^4v_3^6
\]
\[
+42v_1^3v_2^2v_3^3-2v_1^3v_2^4v_3^5-26v_1v_2v_3-24v_1v_2^4v_3^4+45v_1^3v_2^4v_3^4-8v_1^3v_2^5v_3^4
\]
\[
-4v_1^3v_2^5v_3^5+3v_2v_3^3-2v_1^2v_2^5v_3^5-4v_1v_2^4v_3^5+55v_1v_2^3v_3^4+8v_1^2v_3^3-90v_1^2v_2^3v_3^4+v_1
\]
\[
-40v_1^2v_2v_3^3-28v_1^2v_2v_3^4+4v_2+8v_3-v_1^3v_2^3v_3^6-143v_1v_2^3v_3^2-8v_1^2v_2^2v_3^3
\]
\[
-2v_1^2v_2^2v_3^5-v_1v_2^2v_3^6+27v_1^2v_2^4v_3^5-12v_1v_3^3-2v_3v_1+v_1v_2^3-5v_1v_2+4v_2^4v_3^4
\]
\[
+4v_1^2v_3^4+8v_2^4v_3^3-3v_1^3v_2^2v_3^4+7v_1^2v_2^4v_3+128v_1v_2^3v_3^3-62v_1^3v_2^3v_3^3+10v_1v_2^2v_3^5
\]
\[
-30v_2^3v_3^4v_1^3-8v_2^3v_3^2-v_1v_2^3v_3^6+v_2^2v_3^5-22v_1^2v_2^3v_3+4v_1^2v_2^3v_3^6-5v_1v_2^2-10v_2^2v_3^4
\]
\[
+2v_1v_3^4+104v_1^2v_2^2v_3^4-42v_2v_3^2-45v_2^3v_3^3+11v_2^3v_3^2v_1^3+36v_1^2v_2v_3^2-8v_1v_2^5v_3^2
\]
\[
+90v_1v_2v_3^3+v_2^3v_3^5-v_1^2v_2^5v_3+12v_1^2v_2^5v_3^4-6v_1v_2^3v_3^5+8v_1^3v_2^4v_3^3+2v_2^3v_3^4
\]
\[
-13v_1v_2v_3^4-4v_1v_2^5v_3^3+62v_2^2v_3^2-13v_2v_3+4v_1^2v_2^2-14v_1^2v_2^4v_3^4+4v_3^2-13v_1v_3^2
\]
\[
-4v_1v_2^4v_3^3+2v_1^2v_2^2v_3+92v_1^2v_2^3v_3^2-86v_1^2v_2^4v_3^3-4v_1^3v_2^2v_3+13v_1^2v_2^5v_3^3
\]
\[
+30v_2^2v_3^3+2v_2^3v_3-8v_1v_2^2v_3^4+56v_1v_2^4v_3^2-18v_1^2v_2^4v_3^2+2v_1^2v_2^5v_3^2-29v_1^2v_2^3v_3^5
\]
\[
+10v_1^3v_2^3v_3^5-2v_2^4v_3^2v_1^3-11v_2^2v_3;
\]
\[
A_6=\frac{P_6}{(1-v_1)^3(1-v_2)^4(1-v_3)^4},
\]
where
\[
P_6=2(v_3^3-v_1v_3^3v_2+6v_3^2v_2+v_1v_3^2v_2-3v_1v_3^2v_2^2-4v_3^2-v_3^2v_2^2+v_1v_3^2-3v_2v_3
\]
\[
-v_1v_3+3v_1v_3v_2+v_3v_2^2);
\]
\[
A_7=\frac{P_7}{(1-v_1)^4(1-v_2)^6(1-v_3)^7},
\]
and we have that
\[
P_7=2(-4v_1^2v_2^2-v_1-4v_2-7v_3+v_3^2+v_2^3v_1^2v_3^4+16v_1^2v_2^2v_3^2+v_1v_2v_3^3
\]
\[
+v_1^2v_2v_3^2-26v_2^3v_1^2v_3^2-7v_1^2v_2^2v_3^3+6v_2^3v_1^2v_3+4v_2^4v_1^2v_3^3+v_1^2v_2^2v_3^4+3v_2^4v_1^2v_3^2
\]
\[
+12v_1^2v_2^2v_3+v_2^3v_1^2v_3^3-7v_1^2v_2v_3-v_2^4v_1^2v_3-31v_1v_2v_3^2-43v_1v_2^2v_3
\]
\[
+13v_1v_2v_3+5v_1v_2-v_2^3v_1+5v_1v_2^2-26v_2^2v_3^2-v_2^3v_3+12v_2v_3+23v_1v_2^2v_3^3
\]
\[
+19v_1v_2^2v_3^2-4v_1v_2^2v_3^4-7v_2^4v_1v_3^2-7v_2v_3^3+6v_2^2v_3+16v_2v_3^2+v_2^2v_3^3+v_2^2v_3^4
\]
\[
+3v_2^3v_3^2+4v_2^3v_3^3+v_2^4v_1v_3^3-21v_2^3v_1v_3^3+7v_2^3v_1v_3+27v_2^3v_1v_3^2
\]
\[
+4v_1v_3^2+3v_1v_3+v_2v_3^4);
\]
\[
A_8=\frac{P_8}{(1-v_1)^3(1-v_2)^4(1-v_3)^4},
\]
\[
P_8=2(v_1v_2v_3^2-v_3^2+4v_3-6v_2v_3-v_1v_3+v_2^2v_3+3v_1v_2^2v_3-v_1v_2v_3
\]
\[
+v_1-v_2^2+3v_2-3v_1v_2);
\]
\[
A_9=\frac{v_3^3+v_3^2v_1+2v_3^2v_2-v_3^2v_1v_2}{(1-v_1)^2(1-v_2)};
\]
\[
A_{10}=\frac{2(v_3^2v_2^2-3v_3^2v_2-v_3^2v_1+3v_3^2-v_2^2v_3-3v_3v_1v_2+v_3v_1+3v_2v_3)}{(1-v_1)^3(1-v_2)^3(1-v_3)};
\]
\[
A_{11}=\frac{2v_3}{(1-v_1)}+\frac{2v_3}{(1-v_1)^2}-\frac{2v_3(v_3+2)}{(1-v_1)^2(1-v_2)};
\]
\[
A_{12}=\frac{P_{12}}{(1-v_1)^4(1-v_2)^5(1-v_3)^4},
\]
\[
P_{12}=6v_3-4v_3^2+4v_2+v_1+v_3^3+v_2^3v_1-5v_1v_2-5v_2^2v_1+v_2v_3^3-v_2^2v_3
\]
\[
-11v_2v_3+16v_1v_2v_3^2-4v_1v_2v_3^3-4v_2^3v_1v_3-4v_1v_2v_3+24v_2^2v_1v_3
\]
\[
-11v_2^2v_1^2v_3-v_2^3v_1^2v_3-4v_1^2v_2v_3^2+6v_1^2v_2v_3+4v_2^3v_1^2v_3^2+v_2^2v_1^2v_3^3-16v_1v_2^2v_3^2
\]
\[
+v_1^2v_2v_3^3+4v_2^2v_3^2+4v_2^2v_1^2-4v_1v_3;
\]
\[
A_{13}=\frac{2(3v_2v_3-v_2^2v_3-3v_3+v_1v_3-v_1+3v_1v_2+v_2^2-3v_2)}{(1-v_1)^3(1-v_2)^3(1-v_3)};
\]
\[
A_{14}=\frac{v_3+v_1+2v_2-v_1v_2}{(1-v_1)^2(1-v_2)};
\]
\[
A_{15}=\frac{P_{15}}{(1-v_1v_3)(1-v_1)^4(1-v_2)^7(1-v_3)^{10}},
\]
\[
P_{15}=-1+34v_1^2v_2^2v_3^2+21v_1v_2^3v_3+30v_1^2v_2^4v_3^3-30v_1v_2v_3^2+34v_1v_2^2v_3^2
\]
\[
-37v_1v_2^2v_3+v_1v_2^4v_3^5-6v_1^2v_2v_3-7v_1^2v_2^4v_3^4-6v_1^2v_2^5v_3^3-10v_1v_2^4v_3^2-2v_2^4v_3^3
\]
\[
-v_1^2v_2^5v_3^4+2v_1v_2^5v_3^2+2v_1v_2^4v_3^3-7v_1^3v_2^2v_3^3+v_1^3v_2v_3^3+2v_1^3v_2^5v_3^4
\]
\[
+v_1^3v_2^4v_3^5+v_1^3v_2^5v_3^5-7v_1^3v_2^4v_3^3-10v_1^3v_2^4v_3^4-10v_1^3v_2^2v_3^2+2v_1^3v_2v_3^2+v_1^3v_2^2v_3
\]
\[
+v_1^3v_2^5v_3^3-v_1^2v_2^2+4v_1v_2-v_1^2v_2+v_1^3v_2v_3+7v_1v_2v_3+37v_1^2v_2^3v_3^4+24v_1^3v_2^3v_3^3
\]
\[
-2v_1^2v_3^3-24v_2^2v_3^2+v_1v_3-v_1^2v_3^2+6v_1v_3^2-v_2-2v_3-v_3^2+v_1v_2^5v_3^3-35v_1^2v_2^3v_3^2
\]
\[
+9v_1^2v_2^4v_3^2-v_1^2v_2^5v_3^2-7v_1v_2^4v_3+v_1v_2^5v_3+10v_1^2v_2^2v_3+35v_1v_2^2v_3^3-34v_1^2v_2^3v_3^3
\]
\[
-14v_1^2v_2^2v_3^3+14v_1v_2^3v_3^2-21v_1^2v_2^2v_3^4-34v_1v_2^3v_3^3+v_1v_2^3v_3^5-10v_1v_2^3v_3^4
\]
\[
-9v_1v_2v_3^3-2v_1^2v_2v_3^2-v_1^2v_3^4+v_1v_3^3+10v_2^3v_3^3-v_2^4v_3^2+7v_2^3v_3^2+7v_2v_3^2
\]
\[
+10v_2v_3+10v_1^2v_2v_3^3+7v_1^2v_2v_3^4-4v_1^2v_2^4v_3^5+6v_1v_2^4v_3^4-v_2^3v_3^4-v_2^4v_3^4;
\]
\[
A_{16}=\frac{2v_3(v_2v_1-1)(v_2v_3-1)}{(1-v_1)^3(1-v_2)^4(1-v_3)^4};
\]
\[
A_{17}=\frac{P_{17}}{(1-v_1)^4(1-v_2)^6(1-v_3)^7},
\]
\[
P_{17}=2(1+v_1^2v_2v_3-v_1^2v_2^4v_3^2-7v_1^2v_2^2v_3+v_3+v_2+7v_1^2v_2^3v_3^2-v_1^2v_2^3v_3^3-v_1v_3
\]
\[
+7v_2^2v_3^2-7v_2v_3-v_2^3v_3^2-v_2^2v_3^3-v_2^3v_3^3-v_1v_3^2+v_1^2v_2^2+v_1^2v_2-2v_1v_2^3v_3^2+v_1v_2^4v_3^2
\]
\[
-6v_1v_2^3v_3-v_1^2v_2^4v_3^3+16v_1v_2^2v_3-16v_1v_2^2v_3^2+v_1v_2^4v_3+2v_1v_2v_3+4v_1v_2^3v_3^3
\]
\[
+6v_1v_2v_3^2-4v_1v_2);
\]
\[
A_{18}=-\frac{2(v_1v_2-1)(v_2v_3-1)}{(1-v_1)^3(1-v_2)^4(1-v_3)^4};
\]
\[
A_{19}=-\frac{v_3^2}{(1-v_1)^2(1-v_2)};
\]
\[
A_{20}=\frac{2v_3(v_2v_1-1)}{(1-v_1)^3(1-v_2)^3(1-v_3)};
\]
\[
A_{21}=\frac{2v_3}{(1-v_1)^2(1-v_2)};
\]
\[
A_{22}=\frac{P_{22}}{(1-v_1)^4(1-v_2)^5(1-v_3)^4},
\]
\[
P_{22}=-v_1^2v_2^3v_3^2-v_1^2v_2^2v_3^2+4v_1v_2^2v_3^2+v_1v_2^3v_3+4v_3v_2^2v_1^2-v_2^2v_3^2
\]
\[
-5v_1v_2^2v_3-v_1^2v_2^2-v_2v_3^2-5v_1v_2v_3-v_2v_1^2+4v_2v_3+4v_1v_2+v_1v_3-v_2-1;
\]
\[
A_{23}=\frac{2(1-v_1v_2)}{(1-v_1)^3(1-v_2)^3(1-v_3)}.
\]

{\rm (ii)} The multiplicities $m_{\lambda}(R_{0,2})$ when $\lambda$ is a partition in three parts and $n_3\geq n_1$ are
\[
m_{\lambda}(R_{0,2})=4\frac{n_1^2n_2^5n_3^5}{2!5!5!}+16\frac{n_1^2n_2^2n_3^8}{2!2!8!}+6\frac{n_1^2n_2n_3^9}{2!1!9!}
+18\frac{n_1^2n_2^3n_3^7}{2!3!7!}+12\frac{n_1^2n_2^4n_3^6}{2!4!6!}
\]
\[
+2\frac{n_2(n_3-n_1)^{11}}{1!11!}-2\frac{n_2n_3^{11}}{1!11!}+10\frac{n_1^3n_2n_3^8}{3!1!8!}
+10\frac{n_1^3n_2^2n_3^7}{3!2!7!}+6\frac{n_1^3n_2^3n_3^6}{3!3!6!}+2\frac{n_1^3n_2^4n_3^5}{3!4!5!}
\]
\[
+8\frac{n_1n_2^2n_3^9}{1!2!9!}+18\frac{n_1n_2^4n_3^7}{1!4!7!}+4\frac{n_1n_2^6n_3^5}{1!6!5!}
+16\frac{n_1n_2^3n_3^8}{1!3!8!}+2\frac{n_1n_2n_3^{10}}{1!1!10!}+12\frac{n_1n_2^5n_3^6}{1!5!6!}
\]
\[
+{\mathcal O}\left(n^{11}\right).
\]
\end{theorem}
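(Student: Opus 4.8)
The plan is to follow the route outlined in Remark~\ref{computing of multiplicities of Rpq}: first put the Hilbert series in closed form, then convert it to a multiplicity series through the operator calculus, and finally verify the outcome by Lemma~\ref{relation between H and M}~(ii). First I would specialize Corollary~\ref{Hilbert series of Rpq} to $p=0$, $q=2$, $d=3$. Since $p=0$, the first and third sums vanish and only the middle sum (with $\binom{2}{1}=2$ and $\binom{2}{2}=1$) survives, so that
\[
H(F_3(R_{0,2}))=2\,H(F_3(M_2(K)))+(t_1+t_2+t_3-1)\,H(F_3(M_2(K)))^2,
\]
where, writing $E=\prod_{s=1}^{3}\tfrac{1}{1-t_s}$ and $f=\sum_{m\ge0}S_{(m,m)}$, Example~\ref{multiplicity series of 2x2 matrices} gives
\[
H(F_3(M_2(K)))=fE^{2}-S_{(1^3)}E-E^{2}+E.
\]
Expanding the square turns $H(F_3(R_{0,2}))$ into a $\mathbb{C}$-linear combination of terms of the shape $(t_1+t_2+t_3)^{\varepsilon}E^{a}S_{(1^3)}^{b}f^{c}$ with $\varepsilon\in\{0,1\}$, $0\le c\le2$ and $a\le4$.

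Second I would apply the linear operator $M'$ term by term, exploiting that multiplication of symmetric functions becomes a product of commuting operators on multiplicity series: each factor $E$ is one application of the Young operator $Y$ (Proposition~\ref{multiplicity series of Young derives}), each factor $S_{(1^3)}$ is the operator $Y_{(1^3)}$, which for $d=3$ is just multiplication by $v_3$, and each occurrence of $t_1+t_2+t_3=S_{(1)}$ is the branching operator $Y_{(1)}$ (Lemma~\ref{applying the branching theorem}). The seeds are $M'(f^0)=1$, $M'(f)=\tfrac{1}{1-v_2}$, and $M'(f^2)$; the last is taken from Lemma~\ref{M of powers of f} and restricted from four to three variables by setting $v_4=0$ (partitions in at most three parts are exactly those with $\lambda_4=0$, i.e.\ with $v_4$-exponent zero), which yields $M'(f^2;V_3)=\tfrac{1}{(1-v_1v_3)(1-v_2)^2}$. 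Iterating $Y$ up to four times and collecting the resulting rational functions over common denominators is what produces the $23$ summands $A_1,\dots,A_{23}$; the mixed denominator $1-v_1v_3$ visible in $A_5$ and $A_{15}$ is inherited directly from $M'(f^2;V_3)$.

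In practice I would not grind through the fourfold iteration of $Y$, since each application of Proposition~\ref{multiplicity series of Young derives} already carries a sum over $2^{d-1}=4$ sign choices. Instead, following Remark~\ref{how to find the multiplicities}, I would guess the closed form by the $\Omega$-calculus / Xin algorithm and then prove it rigorously by Lemma~\ref{relation between H and M}~(ii): it suffices to check that $M'^{-1}\bigl(\sum_{i=1}^{23}A_i\bigr)$ reproduces the Hilbert series above, which, after clearing denominators, reduces to a single polynomial identity. I expect this verification, rather than the derivation, to be the cleanest rigorous route. The main obstacle is the sheer combinatorial bulk of the bookkeeping, aggravated by the coupling of the first and third variables through $1-v_1v_3$.

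For part~(ii) I would read off $m_\lambda$ as the coefficient of $v_1^{n_1}v_2^{n_2}v_3^{n_3}$ in $\sum_{i}A_i$ by partial fractions. A pure pole $1/(1-v_i)^{k}$ contributes $\binom{n_i+k-1}{k-1}$, a polynomial of degree $k-1$ in $n_i$, so the leading asymptotics come from the terms of highest combined pole order, and the displayed homogeneous expression of total degree $12$ in $(n_1,n_2,n_3)$ is what survives, all lower contributions being absorbed into $\mathcal{O}(n^{11})$. The delicate point is the factor $1/(1-v_1v_3)=\sum_k(v_1v_3)^k$, which couples $n_1$ and $n_3$ and forces a case distinction by the sign of $n_3-n_1$; this is exactly why the statement is confined to $n_3\ge n_1$ and why the shifted powers $(n_3-n_1)^{11}$ occur. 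The hardest part of the asymptotic step is to track precisely which poles remain dominant in the region $n_3\ge n_1$, so that no leading term of total degree $12$ is omitted or counted twice.
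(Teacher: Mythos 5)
Your proposal is correct and follows essentially the same route as the paper: the authors also specialize Corollary \ref{Hilbert series of Rpq} to $p=0$, $q=2$, compute the multiplicity series by applying the operators $Y$, $Y_{(1)}$, $Y_{(1^3)}$ (the last being multiplication by $v_3$ for $d=3$) to the series $M'(f^c)$ as described in Remark \ref{computing of multiplicities of Rpq} (via Maple, with the Xin/$\Omega$-calculus ansatz of Remark \ref{how to find the multiplicities}), and then certify the resulting closed form by the verification criterion of Lemma \ref{relation between H and M} (ii), extracting the asymptotics of part (ii) by coefficient expansion exactly as you outline. The paper omits the detailed computation for the same reason you defer it, so your write-up is a faithful reconstruction of the intended proof.
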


\begin{theorem}
Let $p=0$ and $q=3$.

{\rm (i)} The multiplicity series in two variables of $R_{0,3}$ is
\[
M'(R_{0,3};v_1,v_2)=\sum_{i=1}^{38}A_i,
\]
where
\[
A_1=\frac{3(4v_1^2v_2^2-5v_1v_2-5v_1v_2^2+v_1v_2^3+v_1+4v_2)}{(1-v_1)^4(1-v_2)^7};
\]
\[
A_2=-\frac{12(4v_1^2v_2^2-5v_1v_2-5v_1v_2^2+v_1v_2^3+v_1+4v_2)}{(1-v_1)^4(1-v_2)^6};
\]
\[
A_3=\frac{6(v_1-3v_1v_2+3v_2-v_2^2)}{(1-v_1)^3(1-v_2)^4};
\]
\[
A_4=\frac{P_4}{(1-v_1)^5(1-v_2)^9},
\]
\[
P_4=6(5v_1^3v_2^3+5v_2^2v_1^3+v_1^2v_2^4-24v_1^2v_2^2-7v_1^2v_2^3+7v_1v_2
\]
\[
+24v_1v_2^2-v_1-5v_2^2-5v_2);
\]
\[
A_5=\frac{P_5}{(1-v_1)^5(1-v_2)^8},
\]
where
\[
P_5=-12(5v_1^3v_2^3+5v_2^2v_1^3+v_1^2v_2^4-24v_1^2v_2^2-7v_1^2v_2^3+7v_1v_2
\]
\[
+24v_1v_2^2-v_1-5v_2^2-5v_2);
\]
\[
A_6=\frac{P_6}{(1-v_1)^6(1-v_2)^{10}},
\]
\[
P_6=-6(16v_2^3v_1^4+6v_2^4v_1^4+6v_2^2v_1^4-9v_1^3v_2^4+v_1^3v_2^5-35v_2^2v_1^3-69v_1^3v_2^3
\]
\[
+84v_1^2v_2^3+84v_1^2v_2^2-9v_1v_2-35v_1v_2^3-69v_1v_2^2+v_1+16v_2^2+6v_2+6v_2^3);
\]
\[
A_7=\frac{P_7}{(1-v_1)^6(1-v_2)^{12}},
\]
\[
P_7=27v_2^4v_1^4+v_2^5v_1^4+v_2^2v_1^4+27v_2^3v_1^4-74v_1^3v_2^4-150v_1^3v_2^3-6v_1^3v_2^2
\]
\[
+6v_1^3v_2^5+v_1^2+51v_1^2v_2^2+250v_1^2v_2^3+51v_1^2v_2^4-9v_1^2v_2^5+v_1^2v_2^6-9v_1^2v_2
\]
\[
+6v_1v_2-150v_1v_2^3-74v_1v_2^2-6v_1v_2^4+v_2+27v_2^3+v_2^4+27v_2^2;
\]
\[
A_8=\frac{P_8}{(1-v_1)^6(1-v_2)^{11}},
\]
\[
P_8=-3(27v_2^4v_1^4+v_2^5v_1^4+v_2^2v_1^4+27v_2^3v_1^4-74v_1^3v_2^4-150v_1^3v_2^3-6v_1^3v_2^2
\]
\[
+6v_1^3v_2^5+v_1^2+51v_1^2v_2^2+250v_1^2v_2^3+51v_1^2v_2^4-9v_1^2v_2^5+v_1^2v_2^6-9v_1^2v_2
\]
\[
+6v_1v_2-150v_1v_2^3-74v_1v_2^2-6v_1v_2^4+v_2+27v_2^3+v_2^4+27v_2^2);
\]
\[
A_9=\frac{P_9}{(1-v_1)^5(1-v_2)^9},
\]
\[
P_9=-3(v_1^3v_2^4+v_1^3v_2^2+18v_1^3v_2^3-45v_1^2v_2^3-26v_1^2v_2^2+5v_1^2v_2^4-v_1^2+7v_1^2v_2
\]
\[
+v_1v_2^5+45v_1v_2^2-7v_1v_2^4-5v_1v_2+26v_1v_2^3-v_2^3-18v_2^2-v_2);
\]
\[
A_{10}=\frac{P_{10}}{(1-v_1)^6(1-v_2)^{10}},
\]
\[
P_{10}=3(27v_2^4v_1^4+v_2^5v_1^4+v_2^2v_1^4+27v_2^3v_1^4-74v_1^3v_2^4-150v_1^3v_2^3-6v_1^3v_2^2
\]
\[
+6v_1^3v_2^5+v_1^2+51v_1^2v_2^2+250v_1^2v_2^3+51v_1^2v_2^4-9v_1^2v_2^5+v_1^2v_2^6-9v_1^2v_2
\]
\[
+6v_1v_2-150v_1v_2^3-74v_1v_2^2-6v_1v_2^4+v_2+27v_2^3+v_2^4+27v_2^2);
\]
\[
A_{11}=\frac{P_{11}}{(1-v_1)^5(1-v_2)^8},
\]
\[
P_{11}=6(v_1^3v_2^4+v_1^3v_2^2+18v_1^3v_2^3-45v_1^2v_2^3-26v_1^2v_2^2+5v_1^2v_2^4-v_1^2+7v_1^2v_2
\]
\[
+v_1v_2^5+45v_1v_2^2-7v_1v_2^4-5v_1v_2+26v_1v_2^3-v_2^3-18v_2^2-v_2);
\]
\[
A_{12}=\frac{P_{12}}{(1-v_1)^4(1-v_2)^6},
\]
\[
P_{12}=3(v_1^2v_2^3+11v_1^2v_2^2-5v_1^2v_2+v_1^2-24v_1v_2^2+4v_1v_2^3+4v_1v_2
\]
\[
+v_2+v_2^4+11v_2^2-5v_2^3);
\]
\[
A_{13}=\frac{P_{13}}{(1-v_1)^6(1-v_2)^{12}},
\]
\[
P_{13}=-2(16v_2^3v_1^4+6v_2^4v_1^4+6v_2^2v_1^4-9v_1^3v_2^4+v_1^3v_2^5-35v_2^2v_1^3-69v_1^3v_2^3
\]
\[
+84v_1^2v_2^3+84v_1^2v_2^2-9v_1v_2-35v_1v_2^3-69v_1v_2^2+v_1+16v_2^2+6v_2+6v_2^3);
\]
\[
A_{14}=\frac{P_{14}}{(1-v_1)^6(1-v_2)^{11}},
\]
\[
P_{11}=6(16v_2^3v_1^4+6v_2^4v_1^4+6v_2^2v_1^4-9v_1^3v_2^4+v_1^3v_2^5-35v_2^2v_1^3-69v_1^3v_2^3
\]
\[
+84v_1^2v_2^3+84v_1^2v_2^2-9v_1v_2-35v_1v_2^3-69v_1v_2^2+v_1+16v_2^2+6v_2+6v_2^3);
\]
\[
A_{15}=\frac{8(-v_1+3v_1v_2-3v_2+v_2^2)}{(1-v_1)^3(1-v_2)^3};
\]
\[
A_{16}=\frac{9(v_1^2v_2^2+v_2v_1^2-4v_1v_2+v_2+1)}{(1-v_1)^4(1-v_2)^6};
\]
\[
A_{17}=\frac{3}{(1-v_1)^2(1-v_2)^2};
\]
\[
A_{18}=\frac{3(v_1-v_1v_2+2v_2)}{(1-v_1)^2(1-v_2)};
\]
\[
A_{19}=\frac{6(v_1v_2-1)}{(1-v_1)^3(1-v_2)^4};
\]
\[
A_{20}=\frac{P_{20}}{(1-v_1)^6(1-v_2)^{10}},
\]
\[
P_{20}=3(v_2^4v_1^4+6v_2^2v_1^4+v_2v_1^4+6v_2^3v_1^4-20v_2^3v_1^3-30v_2^2v_1^3-6v_2v_1^3
\]
\[
+54v_1^2v_2^2+15v_1^2v_2^3+15v_2v_1^2-6v_1v_2^3-30v_1v_2^2-20v_1v_2+v_2^3+1+6v_2^2+6v_2);
\]
\[
A_{21}=-\frac{3(v_1^2v_2^2+v_2v_1^2-4v_1v_2+v_2+1)}{(1-v_1)^4(1-v_2)^7};
\]
\[
A_{22}=\frac{9(4v_1^2v_2^2-5v_1v_2-5v_1v_2^2+v_1v_2^3+v_1+4v_2)}{(1-v_1)^4(1-v_2)^5};
\]
\[
A_{23}=\frac{P_{23}}{(1-v_1)^3(1-v_2)^3},
\]
\[
P_{23}=3v_1^2v_2^2-3v_1^2v_2-v_1^2v_2^3+v_1^2-10v_1v_2^2+3v_1v_2+3v_1v_2^3+6v_2^2-3v_2^3+v_2;
\]
\[
A_{24}=\frac{P_{24}}{(1-v_1)^5(1-v_2)^9},
\]
\[
P_{24}=3(1-v_2v_1^3-v_2^3v_1^3-3v_2^2v_1^3+10v_1^2v_2^2+5v_2v_1^2-5v_1v_2^2-10v_1v_2+v_2^2+3v_2);
\]
\[
A_{25}=\frac{P_{25}}{(1-v_1)^6(1-v_2)^{11}},
\]
\[
P_{25}=-3(v_2^4v_1^4+6v_2^2v_1^4+v_2v_1^4+6v_2^3v_1^4-20v_2^3v_1^3-30v_2^2v_1^3-6v_2v_1^3
\]
\[
+54v_1^2v_2^2+15v_1^2v_2^3+15v_2v_1^2-6v_1v_2^3-30v_1v_2^2-20v_1v_2+v_2^3+1+6v_2^2+6v_2);
\]
\[
A_{26}=\frac{P_{26}}{(1-v_1)^6(1-v_2)^{9}},
\]
\[
P_{26}=2(6v_2^2v_1^4+6v_2^4v_1^4+16v_2^3v_1^4+v_1^3v_2^5-35v_2^2v_1^3-69v_1^3v_2^3-9v_1^3v_2^4
\]
\[
+84v_1^2v_2^3+84v_1^2v_2^2-9v_1v_2-69v_1v_2^2-35v_1v_2^3+v_1+16v_2^2+6v_2^3+6v_2);
\]
\[
A_{27}=\frac{P_{27}}{(1-v_1)^5(1-v_2)^8},
\]
\[
P_{27}=6(v_2v_1^3+3v_2^2v_1^3+v_2^3v_1^3-5v_2v_1^2-10v_1^2v_2^2+5v_1v_2^2+10v_1v_2-1-v_2^2-3v_2);
\]
\[
A_{28}=\frac{P_{28}}{(1-v_1)^6(1-v_2)^{12}},
\]
\[
P_{28}=v_2^4v_1^4+6v_2^2v_1^4+v_2v_1^4+6v_2^3v_1^4-20v_2^3v_1^3-30v_2^2v_1^3-6v_2v_1^3
\]
\[
+54v_1^2v_2^2+15v_1^2v_2^3+15v_2v_1^2-6v_1v_2^3-30v_1v_2^2-20v_1v_2+v_2^3+1+6v_2^2+6v_2;
\]
\[
A_{29}=\frac{P_{29}}{(1-v_1)^5(1-v_2)^7},
\]
\[
P_{29}=6(5v_1^3v_2^3+5v_2^2v_1^3-7v_1^2v_2^3+v_1^2v_2^4-24v_1^2v_2^2-v_1+7v_1v_2
\]
\[
+24v_1v_2^2-5v_2-5v_2^2);
\]
\[
A_{30}=\frac{P_{30}}{(1-v_1)^6(1-v_2)^9},
\]
\[
P_{30}=-(27v_2^4v_1^4+v_2^2v_1^4+27v_2^3v_1^4+v_2^5v_1^4-150v_1^3v_2^3-6v_1^3v_2^2-74v_1^3v_2^4
\]
\[
+6v_1^3v_2^5+v_1^2v_2^6-9v_1^2v_2+250v_1^2v_2^3+51v_1^2v_2^2-9v_1^2v_2^5+v_1^2+51v_1^2v_2^4-74v_1v_2^2
\]
\[
-150v_1v_2^3+6v_1v_2-6v_1v_2^4+27v_2^3+v_2^4+v_2+27v_2^2);
\]
\[
A_{31}=\frac{P_{31}}{(1-v_1)^5(1-v_2)^7},
\]
\[
P_{31}=-3(18v_1^3v_2^3+v_1^3v_2^2+v_1^3v_2^4-45v_1^2v_2^3-26v_1^2v_2^2+7v_1^2v_2-v_1^2+5v_1^2v_2^4
\]
\[
-7v_1v_2^4+26v_1v_2^3+v_1v_2^5-5v_1v_2+45v_1v_2^2-18v_2^2-v_2^3-v_2);
\]
\[
A_{32}=\frac{P_{32}}{(1-v_1)^4(1-v_2)^5},
\]
\[
P_{32}=-3(v_1^2v_2^3+11v_1^2v_2^2-5v_1^2v_2+v_1^2-24v_1v_2^2+4v_1v_2^3+4v_1v_2
\]
\[
+v_2+v_2^4+11v_2^2-5v_2^3);
\]
\[
A_{33}=\frac{3}{1-v_1};
\]
\[
A_{34}=-\frac{6}{(1-v_1)^2(1-v_2)};
\]
\[
A_{35}=-\frac{6(v_1^2v_2^2+v_2v_1^2-4v_1v_2+v_2+1)}{(1-v_1)^4(1-v_2)^5};
\]
\[
A_{36}=\frac{7(1-v_1v_2)}{(1-v_1)^3(1-v_2)^3};
\]
\[
A_{37}=\frac{P_{37}}{(1-v_1)^5(1-v_2)^7},
\]
\[
P_{37}=-3(v_2v_1^3+3v_2^2v_1^3+v_2^3v_1^3-5v_2v_1^2-10v_1^2v_2^2+5v_1v_2^2+10v_1v_2-1-v_2^2-3v_2);
\]
\[
A_{38}=\frac{P_{38}}{(1-v_1)^6(1-v_2)^9},
\]
\[
P_{38}=-(v_2^4v_1^4+6v_2^2v_1^4+v_2v_1^4+6v_2^3v_1^4-20v_2^3v_1^3-30v_2^2v_1^3-6v_2v_1^3
\]
\[
+54v_1^2v_2^2+15v_1^2v_2^3+15v_2v_1^2-6v_1v_2^3-30v_1v_2^2-20v_1v_2+v_2^3+1+6v_2^2+6v_2).
\]

{\rm (ii)} For $\lambda$ being a partition in two parts
the multiplicities of $R_{3,0}$ are
\[
m_{\lambda}(R_{3,0})=\frac{1}{11!}n_1n_2^7(66n_1^4+77n_1n_2^3+165n_1^2n_2^2+165n_1^3n_2+14n_2^4)+{\mathcal O}\left(n^{11}\right).
\]
\end{theorem}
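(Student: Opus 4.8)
The plan is to turn the known Hilbert series into the multiplicity series by the operator calculus of Section~2, and then to certify the resulting formula by Berele's criterion. Specializing Corollary~\ref{Hilbert series of Rpq} (equivalently Proposition~\ref{Algebra Rpq}) to $p=0$, $q=3$, $d=2$, and using that $S_{(1^3)}(t_1,t_2)=0$ since a three-row diagram cannot be supported on two variables, I would record
\[
H(F_2(R_{0,3}))=3H_2+3(t_1+t_2-1)H_2^2+(t_1+t_2-1)^2H_2^3,
\]
where $H_2=H(F_2(M_2(K)))=P^2(f-1)+P$, with $P=\prod_{i=1}^2(1-t_i)^{-1}=\sum_{n\ge0}S_{(n)}$ and $f=\sum_{n\ge0}S_{(n,n)}$.

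The decisive simplification in two variables is that $S_{(n,n)}(t_1,t_2)=(t_1t_2)^n$, so $f|_{d=2}=(1-t_1t_2)^{-1}$ and hence $f^j|_{d=2}=(1-t_1t_2)^{-j}=\sum_{n}\binom{n+j-1}{j-1}S_{(n,n)}$; reading off the coefficients gives $M'(f^j;V_2)=(1-v_2)^{-j}$ (the same value is obtained by setting $v_3=v_4=0$ in Lemma~\ref{M of powers of f}). Thus the usually hard powers-of-$f$ multiplicities are trivial here, and all the complexity sits in the two factors $P$ and $t_1+t_2-1=S_{(1)}-1$. I would expand $H_2^2$ and $H_2^3$ into finite linear combinations of the building blocks $P^kf^j$ and then pass to multiplicity series: multiplication by $P=\sum_nS_{(n)}$ is the Young operator $Y$ of Proposition~\ref{multiplicity series of Young derives}, multiplication by $S_{(1)}$ is the operator $Y_{(1)}$ of Lemma~\ref{applying the branching theorem}, and, since these are multiplications by symmetric functions, the operators commute. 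Consequently
\[
M'\big((S_{(1)}-1)^aP^kf^j;V_2\big)=(Y_{(1)}-\mathrm{id})^a\,Y^k\big((1-v_2)^{-j}\big),
\]
and, by linearity of $M'$,
\[
M'(R_{0,3};V_2)=3\,M'(H_2)+3(Y_{(1)}-\mathrm{id})\,M'(H_2^2)+(Y_{(1)}-\mathrm{id})^2\,M'(H_2^3).
\]

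Carrying this out reduces to substituting the two-variable forms of Proposition~\ref{multiplicity series of Young derives} and Lemma~\ref{applying the branching theorem}. This is where I expect the main obstacle to be: for $d=2$ each application of $Y$ splits every summand into two and raises the pole orders, so the iterated operators $Y^k$ with $k$ up to $6$, combined with $(Y_{(1)}-\mathrm{id})^a$, generate a large number of rational terms whose collection into the $38$ displayed summands $A_i$ is heavy bookkeeping best entrusted to a computer algebra system. For that reason I would not rely on the forward computation for rigor, but instead treat $\sum_{i=1}^{38}A_i$ as an ansatz and \emph{verify} it via Lemma~\ref{relation between H and M}~(ii): writing $h(t_1,t_2)$ for the candidate series in the $t$-variables, the criterion for $d=2$ reduces to the single rational-function identity $(t_1-t_2)H(F_2(R_{0,3}))=t_1h(t_1,t_2)-t_2h(t_2,t_1)$, which can be checked once by clearing denominators and which, being an ``if and only if'', settles part~(i) completely.

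For part~(ii) I would first combine $\sum_{i=1}^{38}A_i$ over a common denominator into a single reduced rational function $Q(v_1,v_2)/((1-v_1)^c(1-v_2)^e)$ and then extract $m_{(n_1+n_2,\,n_2)}=[v_1^{n_1}v_2^{n_2}]M'(R_{0,3};V_2)$ from the expansion of $Q$ around the dominant singularity $(v_1,v_2)=(1,1)$, using $[v_1^{n_1}v_2^{n_2}](1-v_1)^{-\alpha}(1-v_2)^{-\beta}=\binom{n_1+\alpha-1}{\alpha-1}\binom{n_2+\beta-1}{\beta-1}$. The leading asymptotics are governed by the lowest-order nonvanishing Taylor coefficients of $Q$ at $(1,1)$; the delicate point is that $Q$ vanishes there to high order, so that several terms of equal total degree survive and recombine into the homogeneous degree-$12$ polynomial $\tfrac{1}{11!}n_1n_2^7(66n_1^4+77n_1n_2^3+165n_1^2n_2^2+165n_1^3n_2+14n_2^4)$, everything of strictly lower degree being absorbed into ${\mathcal O}(n^{11})$. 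As a final consistency check I would compare this leading polynomial against a handful of exact coefficients read directly off the rational function.
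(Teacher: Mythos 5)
Your proposal is correct and follows essentially the same route as the paper: specialize the Hilbert series formula of Corollary \ref{Hilbert series of Rpq}, convert it to a multiplicity series by the operator calculus of Remark \ref{computing of multiplicities of Rpq} (powers of $Y$ and $Y_{(1)}$ acting on $M'(f^j)$, which in two variables collapses to $(1-v_2)^{-j}$ since $S_{(1^3)}(t_1,t_2)=0$) carried out by computer algebra, and then certify the resulting ansatz rigorously via Lemma \ref{relation between H and M}~(ii), which is exactly the verification the authors invoke when they omit the proofs; the asymptotics in part~(ii) are likewise obtained as you describe, by expanding the rational multiplicity series around $(v_1,v_2)=(1,1)$.
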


\begin{theorem}
Let $p=0$, $q=4$ and let $\lambda$ be a partition in two parts. Then the multiplicities $m_{\lambda}(R_{0,4})$ are
\[
m_{\lambda}(R_{0,4})=\frac{1}{10!}n_1n_2^6(n_1+n_2)^6+{\mathcal O}\left(n^{12}\right).
\]
\end{theorem}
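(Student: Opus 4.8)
The plan is to specialize Corollary~\ref{Hilbert series of Rpq} to $p=0$, $q=4$ and $d=2$, to pass from the Hilbert series to the multiplicity series, and then to read off the leading term of the coefficients. Since $\lambda$ runs over partitions in two parts, restricting to $d=2$ variables loses nothing. The decisive simplifications in two variables are $S_{(1^3)}(T_2)=0$ and $S_{(m,m)}(T_2)=(t_1t_2)^m$, so that
\[
f(T_2)=\sum_{m\ge0}S_{(m,m)}(T_2)=\frac{1}{1-t_1t_2},\qquad M'(f^{\,j};V_2)=\frac{1}{(1-v_2)^{\,j}},\quad j=1,\dots,4,
\]
the second identity following from $M'(S_{(m,m)};V_2)=v_2^m$ and $(1-t_1t_2)^{-j}=\sum_m\binom{m+j-1}{j-1}S_{(m,m)}(T_2)$. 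With these, Corollary~\ref{Hilbert series of Rpq} collapses to the explicit rational function
\[
H(F_2(R_{0,4}))=\sum_{j=1}^{4}\binom{4}{j}(t_1+t_2-1)^{j-1}\left(\frac{1}{(1-t_1)(1-t_2)}\right)^{j}\left(\frac{1}{(1-t_1t_2)(1-t_1)(1-t_2)}-\frac{1}{(1-t_1)(1-t_2)}+1\right)^{j}.
\]

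Next I would assemble $M'(R_{0,4};v_1,v_2)$ as prescribed in Remark~\ref{computing of multiplicities of Rpq}. Expanding each summand into monomials $u^{m}f^{b}$ with $u=\prod_s(1-t_s)^{-1}$, every such monomial has multiplicity series obtained from the block $M'(f^{\,b};V_2)=(1-v_2)^{-b}$ by applying the two-variable forms of the operators of Proposition~\ref{multiplicity series of Young derives} and Lemma~\ref{applying the branching theorem}: multiplying by $u$ becomes the operator $Y$, multiplying by $t_1+t_2-1$ becomes $Y_{(1)}-\mathrm{id}$, where for $d=2$
\[
Y(h)=\frac{v_1h(v_1,v_2)-v_2h(v_2,v_2)}{(1-v_1)(v_1-v_2)},\qquad Y_{(1)}(h)=v_1h(v_1,v_2)+\frac{v_2}{v_1}\bigl(h(v_1,v_2)-h(0,v_2)\bigr).
\]
A useful observation that keeps the bookkeeping finite is that $Y$ sends a rational function whose denominator is a product of powers of $(1-v_1)$ and $(1-v_2)$ to another such function, the spurious factor $v_1-v_2$ always cancelling; for instance $Y\bigl((1-v_2)^{-j}\bigr)=(1-v_1)^{-1}(1-v_2)^{-j}$. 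Hence, exactly as in the displayed two-variable series for $R_{0,2}$ and $R_{0,3}$, the outcome is $M'(R_{0,4};v_1,v_2)=P(v_1,v_2)/\bigl((1-v_1)^{a}(1-v_2)^{b}\bigr)$ for explicit $a,b$ and an explicit polynomial $P$. In practice one first guesses this rational function by the method of Remark~\ref{how to find the multiplicities} and then certifies it with Lemma~\ref{relation between H and M}(ii), which for $d=2$ reduces to the single identity $H(F_2(R_{0,4}))(t_1-t_2)=t_1h(t_1,t_2)-t_2h(t_2,t_1)$.

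Finally I would extract the asymptotics. Because the denominator is a pure product of $(1-v_1)$ and $(1-v_2)$ powers, the coefficient of $v_1^{n_1}v_2^{n_2}$ is the honest polynomial
\[
m_{(\lambda_1,\lambda_2)}=\sum_{i,j}P_{ij}\binom{n_1-i+a-1}{a-1}\binom{n_2-j+b-1}{b-1},
\]
whose leading homogeneous parts are obtained by expanding the binomials. Two features organize the answer: first, $H(F_2(R_{0,4}))$ is symmetric while Lemma~\ref{relation between H and M}(i) identifies $m_{(\lambda_1,\lambda_2)}$ with a shifted coefficient of $H(F_2(R_{0,4}))(t_1-t_2)$, so the leading term is antisymmetric in $(\lambda_1,\lambda_2)$, hence divisible by $\lambda_1-\lambda_2=n_1$; this explains the factor $n_1$ and reduces the task to determining the symmetric cofactor, which should equal $\tfrac{1}{10!}\lambda_1^6\lambda_2^6=\tfrac{1}{10!}(n_1+n_2)^6n_2^6$. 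The main obstacle is that the naive top homogeneous part of this coefficient polynomial, of degree $(a-1)+(b-1)$, vanishes, so one must descend through several cancelling homogeneous layers before reaching the genuine leading part of degree $13$; this is already visible for $R_{0,2}$, whose true leading degree $7$ is strictly below the value $9$ suggested by the denominator $(1-v_1)^4(1-v_2)^7$. Carrying out this cancellation bookkeeping with the explicit $P$, and thereby pinning down the exact rational coefficient $1/10!$, is the delicate computational heart of the argument; everything of degree below $13$ is then collected into ${\mathcal O}(n^{12})$.
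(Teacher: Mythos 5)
Your plan is essentially the method the paper itself uses: the introductory paragraph of Section 3 says all the theorems there were obtained by specializing Corollary \ref{Hilbert series of Rpq}, computing the multiplicity series with the operators of Remark \ref{computing of multiplicities of Rpq} (in \emph{Maple}), certifying the resulting rational function via Lemma \ref{relation between H and M}(ii), and expanding to read off the asymptotics; the proofs are then omitted. Your two-variable specialization is correct and even streamlines the paper's setup: $S_{(1^3)}(T_2)=0$ and $M'(f^j;V_2)=(1-v_2)^{-j}$ (so none of the hard content of Lemma \ref{M of powers of f} is needed when $d=2$), your closed forms for $Y$ and $Y_{(1)}$ agree with Proposition \ref{multiplicity series of Young derives} and Lemma \ref{applying the branching theorem} for $d=2$, and your observation that these operators preserve denominators of the pure product form $(1-v_1)^a(1-v_2)^b$ is right and is what makes the bookkeeping finite. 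Like the paper, you leave the decisive step — the explicit numerator $P$ and the extraction of $\tfrac{1}{10!}n_1n_2^6(n_1+n_2)^6$ through the cancelling homogeneous layers — to a machine computation, so what you have is a correct program rather than a finished proof; but that is exactly the status of the paper's own argument.

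One step of your outline, however, is genuinely wrong: the claim that, because $H(F_2(R_{0,4}))(t_1-t_2)$ is antisymmetric, the leading homogeneous part of $m_{(\lambda_1,\lambda_2)}$ must be antisymmetric in $(\lambda_1,\lambda_2)$, hence of the form $n_1$ times a \emph{symmetric} cofactor. The antisymmetric coefficient function $b(p_1,p_2)$ of $H\cdot(t_1-t_2)$ is only piecewise polynomial; antisymmetry exchanges the wedge $p_1>p_2$ with the wedge $p_1<p_2$, so it imposes no symmetry constraint whatsoever on the single polynomial representing $b$ on one wedge, which is the polynomial that gives $m_\lambda$. The paper's own results refute your principle: for $R_{0,2}$ the leading form is $\tfrac{1}{6!}n_1n_2^4(5n_1^2+6n_1n_2+2n_2^2)=\tfrac{1}{6!}(\lambda_1-\lambda_2)\lambda_2^4(5\lambda_1^2-4\lambda_1\lambda_2+\lambda_2^2)$, whose cofactor is visibly not symmetric under $\lambda_1\leftrightarrow\lambda_2$, and the same failure occurs for $R_{0,3}$. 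That the $q=4$ answer $\tfrac{1}{10!}(\lambda_1-\lambda_2)\lambda_1^6\lambda_2^6$ happens to be antisymmetric is an outcome of the computation, not something you may assume; if you used this ``reduction to the symmetric cofactor'' to constrain or shortcut the expansion, the same reasoning would already produce wrong answers for $q=2$ and $q=3$. Drop that organizing step: the factor $n_1$ and the cofactor must both come out of the honest expansion of $P(v_1,v_2)/\bigl((1-v_1)^a(1-v_2)^b\bigr)$, which is precisely the computation the paper performs and verifies with Lemma \ref{relation between H and M}(ii).
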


\end{document}